\newcommand{\cX}{\mathcal{X}}
\newcommand{\nada}[1]{}
\def\@maketitle{\newpage
    \null
    \vskip .8truein
    \begin{center}%
     {\bf \@title \par}%
     \vskip 1.5em
     {\small
      \lineskip .5em
      \begin{tabular}[t]{c}\@author
      \end{tabular}\par}%
    \end{center}%
    \par
    \vskip .4truein}
\let\G=\Gamma
\def\R{\mathbb{R}}
\def\Uno{\mathbbm{1}}
\def\G{\mathbb{G}}
\def\N{\mathbb{N}}
\def\dH{\mathbb{H}}
\def\Z{\mathbb{Z}}
\def\cH{\mathcal{H}}
\def\cX{\mathcal{X}}
\def\deps{\delta_{\frac{1}{\varepsilon}}}
\newcommand{\cg}{\mathrm{g}}
\def\@maketitle{\newpage
    \null
    \vskip .8truein
    \begin{center}%
     {\bf \@title \par}%
     \vskip 1.5em
     {\small
      \lineskip .5em
      \begin{tabular}[t]{c}\@author
      \end{tabular}\par}%
    \end{center}%
    \par
    \vskip .4truein}
\newtheorem{teo}{Theorem}[section]
\newtheorem{theorem}{Theorem}[section]
\newtheorem{lemma}{Lemma}[section]
\newtheorem{proposition}{Proposition}[section]
\newtheorem{definition}{Definition}[section]
\newtheorem{defi}{Definition}[section]
\newtheorem{rem}{Remark}[section]
\newtheorem{ex}{Example}[section]
\newcommand{\norma}[1]{\left \|#1\right \|}
\def\proof{\list{}{\setlength{\leftmargin}{0pt}
                      \parskip=0pt\parsep=0pt\listparindent=2em
                      \itemindent=0pt}\item[]\futurelet\testchar\@maybe}
\def\@maybe{\ifx[\testchar \let\next\@Opt
          \else \let\next\@NoOpt \fi \next}
\def\@Opt[#1]{{\it Proof of #1.\ }}\def\@NoOpt{{\it Proof.\ }}
\begin{document}

\title{\Large \bf $\Gamma$- convergence and homogenisation for a class of degenerate functionals.}

\author{Nicolas Dirr\thanks{Cardiff School of Mathematics, Cardiff University, Cardiff, UK, e-mail: dirrnp@cardiff.ac.uk.} \and
Federica Dragoni
\thanks{Cardiff School of Mathematics, Cardiff University, Cardiff, UK, e-mail: DragoniF@cardiff.ac.uk.} \and
Paola Mannucci 
\thanks{Dipartimento di Matematica ``Tullio Levi-Civita'' ,  Universit\`a di Padova, Padova, Italy, e-mail: mannucci@math.unipd.it} \and
Claudio Marchi
\thanks{Dipartimento di Ingegneria dell'Informazione,  Universit\`a di Padova, Padova, Italy, e-mail: claudio.marchi@unipd.it}
}

\maketitle
\begin{abstract}
\noindent
This paper is on $\Gamma$-convergence for degenerate integral functionals related to homogenisation problems  in the Heisenberg group. Here both the rescaling and the notion of invariance or periodicity are chosen in a way motivated by the geometry of the Heisenberg group.  
Without using special geometric features, these functionals would be neither coercive nor periodic, so classic results do not apply.  All the results apply to the more general case of Carnot groups.
\end{abstract}

\noindent {\bf Keywords}:   Gamma-convergence, homogenisation,  Carnot groups, Heisenberg group, 
H\"ormander condition,
degenerate functionals.

\section{Introduction}

$\Gamma$-convergence is a notion of convergence of a family of functionals $F_\varepsilon$ to a  functional $F_{\infty}$  which goes back to E. de Giorgi (see \cite{DEGiorgi, DEGiorgi1, DEGiorgi2}) and guarantees the convergence of minimisers of the functionals $F_\varepsilon$ to minimisers of the limit (or effective) functional $F_{\infty}$; for a precise definition  and properties see Section \ref{SectionGamma}.The convergence of minimisers  implies, under suitable conditions, convergence of solutions of the Euler-Lagrange equations, and is therefore a useful tool for homogenisation problems, in particular in the random and nonlinear case, see e.g. \cite{DalMasoModica1}. If the functional is integral, i.e. of the form

$$
u\mapsto
F(u)=\int_{A}f(x,\nabla  u(x)) dx,
$$ where $A$ is a (Borel) domain, $\nabla u$ is the distributional gradient of the real-valued function $u$ assumed to be in a suitable $L^p$-space,  and $f:\R^N\times\R^N\to \R$ with some regularity and growth assumptions, then the corresponding Euler-Lagrange equation is a nonlinear divergence form equation.

 The above functional can be generalised to degenerate functionals in the setting of 
 Carnot groups. In this paper we focus specifically  on  the $n$-dimensional  Heisenberg group $\dH^n$, which is a step 2 Carnot group defined on $\R^{2n+1}$ (see Section \ref{Preliminaries} for definitions and properties).
Thus the family of functionals considered here is of the form

$$
u\mapsto
F(u)=\int_{A}f(x,\nabla_{\cX}  u(x)) dx
$$
where $A$ is a domain on $\R^N=\R^{2n+1}$ while $\nabla_{\cX}u$ is the horizontal gradient in the Heisenberg group
which belongs to a suitable $m$-dimensional subspace of the ``space of derivatives'' (tangent space),
see Definition \ref{horGradient}. Since $m=2n$, we have $m<N$:
as a consequence, such functionals are typically not coercive in the classical sense, so classical results do not apply.\\
Working in the setting of the Heisenberg group, the scaling  needs to adapt to the underlying geometrical structure, therefore we will consider the following anisotropic scaling 
 $\delta_{1/\varepsilon}(\underline{x}_1,\underline{x}_2,x_3)=(\varepsilon^{-1}\underline{x}_1,\varepsilon^{-1}\underline{x}_2,\varepsilon^{-2}x_3),$ with $(\underline{x}_1,\underline{x}_2,x_3)\in \R^N\equiv \R^n\times \R^n
\times \R$. 
 Hence the scaling is anisotropic w.r.t. the last component. The anisotropy can be understood heuristically in another way: at each point, some directions are ``forbidden'', i.e. paths of the associated control problem can move only on a $2n$-dimensional subspace of a $(2n+1)$-dimensional space. By varying their direction often (i.e. by the use of non-trivial commutators from the H\"ormander condition) they are able to reach any given point but the cost for ``zig-zagging'' to get in the forbidden direction is higher, so typically they move slower in these directions, which makes  a faster rescaling necessary.

The limit functional $F_{\infty}$ is of the same form,  i.e.
$$
 F_{\infty}(u)=\int_A f_0\left(\nabla_{\cX} u(x)\right)dx,
$$
where the integral function $f_0$ does not depend  on $x$ anymore
(however  the horizontal gradient still depends on $x$ trough the vector fields).
The corresponding Euler-Lagrange equation will not be elliptic  but only subelliptic,
we refer to  \cite{BLU, DomokosManfredi} for an overview on subelliptic equations.

The study of homogenisation in subelliptic settings started with the periodic case (see e.g. \cite{BW, BMT, Franchi1, Franchi2, Nicoletta, MS, Stroffolini}).
The first result for the stochastic case in this degenerate setting  is  \cite{DDMM}, where the authors studied the case Hamilton-Jacobi (first order) case for Hamiltonian depending on the horizontal gradient in the case of Carnot groups.\\

As $\Gamma$-convergence has nice compactness results, the main difficulty is in general the identification of the $\Gamma$-limit as again an integral functional. Here it is  used that the integrand can be retrieved by considering minimisation problems over small cubes with {\em affine } boundary conditions, see \cite{DalMasoModica2}. A generalisation to the setting of the Heisenberg group requires a suitable adaption of the notion of ``affine'', namely $H$-affine functions, see  Section \ref{Main result and notations.} for the definition and references. 
Recently some results for $\Gamma$-convergence of degenerate functionals in very general geometries have been proved in \cite{Maione}.
Here we use that the minimal normalised energy on anisotropicaly  (Heisenberg dilations)
scaled cubes  is  subadditive  by constructing admissible functionals  on large cubes and patching together translated minimisers  on translated cubes. Here we need  to use the specific properties of translations in the Heisenberg group. Note that 
 cubes rescaled by an integer (i.e. $\delta_{k}(Q)$)  cannot be written as union of translations of the original cube Q, not even up to a set of measure zero. This a crucial difference with the Euclidean  case  but we overcome the issue by controlling the error term. 
 
 A closely related approach can be found in \cite{Hafsa}, where the $\Gamma$-convergence in Cheeger-Sobolev spaces is considered. Our functional depending only on $|\nabla_{\cX}u|$ instead of
$\nabla_{\cX} u$ would be a functional on a Cheeger-Sobolev space, but the natural tiling generalising periodicity in our case does not satisfy the assumptions of \cite{Hafsa}.

All the results are written in the Heisenberg group for sake of simplicity but the proofs apply to general Carnot groups.

These results can be applied to functionals related to subelliptic $p$-Laplace equations and generalised to   stochastic functionals  with short correlations (as done in Dal Maso-Modica \cite{DMMampa}).

This paper is organised as follows.

 In Section 2 we  give an overview on the Heisenberg group and its geometry, in particular the scaling, the horizontal gradient and the notion of periodicity.

In Section 3 we define precisely our functionals and we recall the Sobolev
spaces adapted to the structure of the Heisenberg group, in particular their embedding into $L^p$-spaces, through the embedding in fractional  Sobolev spaces, which will be crucial for the later $\Gamma$-convergence results.

Section 4 is devoted to  the $\Gamma$-convergence  results. We first recall the definition and some basic properties of $\Gamma$-convergence. We use compactness properties of the $\Gamma$-convergence and we give conditions under which the $\Gamma$-limit  is again an integral functional, thus recovering the results by Dal Maso-Modica, \cite{DalMasoModica2}, for our degenerate functionals.

In Section 5 we prove the main result of the paper, that is the  homogenisation result for
Heisenberg-periodic functionals. In fact, we show a Akcoglu-Krengel
type result, \cite{AK}, for our anisotropic Heisenberg-periodic functionals, i.e. the convergence of normalised minimal energies over rescaled cubes. For this purpose, we exploit an underlying subadditive structure.

In Section 6 we mention some applications and further directions of research.  We  highlight how the results apply to more general functionals associated to Carnot group structures.
We then give some connections with homogenisation for subelliptic $p$-Laplacian.
 Finally we explain how our methods can be used to generalise the results to the 
stochastic case  with short correlations.

\section{Preliminaries: The Heisenberg group.}
\label{Preliminaries}
Carnot groups
 are non-commutative Lie groups: thus they are endowed both with a non-commutative algebraic structure and with a manifold structure. The lack of commutativity in the algebraic structure reflects on the manifold structure as restrictions on the admissible motions. This means that the allowed curves are constrained to have their velocities in a lower dimensional subspace of the tangent space of the manifold. Then the associated manifold structure is not Riemannian but sub-Riemannian. 
 In this paper we give details for the Heisenberg group only but the results can be easily generalised to Carnot groups (see Section 6). We
  refer the reader to \cite{BLU} for definitions and properties on Carnot groups and to
   \cite{montgomery} for an overview on
  sub-Riemannian manifolds.\\

To keep the paper easily readable  we omit the intrinsic definition of the Heisenberg group, introducing it directly as the following non-commutative group  structure on $\R^N$. 
\begin{defi}
\label{Heisenberg}
The $n$-dimensional Heisenberg group $\dH^n$, with $n\geq 1$, is a Carnot group of step 2
isomorphic to  $\R^N$, where  $N=2n+1$,
endowed with the  following non-commutative  group operation:
\begin{equation}
\label{Group_Law}
x* y:=\left(\underline{x}_1+\underline{y}_1, \underline{x}_2+\underline{y}_2,x_3+y_3+\frac{\underline{x}_1\cdot \underline{y}_2- \underline{x}_2\cdot \underline{y}_1}{2}\right)
\end{equation}
for all $x=(\underline{x}_1,\underline{x}_2,x_3),y=(\underline{y}_1,\underline{y}_2,y_3)\in \R^{N}\equiv \R^n\times \R^n\times \R$ and where by $\cdot$ we indicate the standard inner product in $\R^n$.
\end{defi}

In all Carnot groups it is possible to define  a natural scaling, induced by the Lie algebra stratification, namely dilations. The dilations replace the multiplication by scalars in the standard vector space structure of the Euclidean $\R^N$.
\begin{defi}
\label{Dilations_Heisenberg}
The dilations in the Heisenberg group are the family of group homeomorphisms defined as, for all $t>0$,  
$\delta_{t}:\R^N\to \R^N$  with
\begin{equation}
\label{Dialtions}
\delta_{t}(x)=(t\,\underline{x}_1,t\,\underline{x}_2, t^2\,x_3),
\quad \forall\;
x=(\underline{x}_1,\underline{x}_2,x_3)\in \R^N\equiv \R^n\times \R^n\times\R.
\end{equation}
\end{defi}
Thus  the dilations in $\dH^n$ coincide with the standard Euclidean scaling in the first $2n$ components while the last component scales as $t^2$.

The following properties of dilations are true in all Carnot groups and they can be easily checked in the Heiseberg group by using formulas \eqref{Group_Law} and  \eqref{Dialtions}.
\begin{lemma}
\label{DilationsProperties}
For all  $t, s >0$, the following properties hold true:
\begin{enumerate}
\item[(1)]  $\delta_{1} = \operatorname{id}$;
\item[(2)] $\delta^{-1}_{t} = \delta_{t^{-1}}$;
  \item[(3)]
  $\delta_{t}\circ \delta_{s} =\delta_{t \;s}$;
\item[(4)] for every $x, y\in \R^N$ one has $\delta_t(x) * \delta_t(y) = \delta_t(x* y)$.
\end{enumerate}
\end{lemma}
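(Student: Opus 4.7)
The plan is simply to verify each of the four properties by direct computation from Definitions \ref{Heisenberg} and \ref{Dilations_Heisenberg}. Properties (1)--(3) are immediate from the coordinate formula $\delta_t(x)=(t\underline{x}_1,t\underline{x}_2,t^2 x_3)$: setting $t=1$ gives (1); solving $\delta_t(y)=x$ componentwise gives $y=(\underline{x}_1/t,\underline{x}_2/t,x_3/t^2)=\delta_{t^{-1}}(x)$, which is (2); and applying $\delta_t$ to $\delta_s(x)$ yields $(ts\,\underline{x}_1,ts\,\underline{x}_2,t^2 s^2 x_3)=\delta_{ts}(x)$, which is (3). These are routine and deserve only a line each.

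The only substantive point is property (4), which is the compatibility of the anisotropic dilation with the non-abelian group law \eqref{Group_Law}. First I would compute the left-hand side: by \eqref{Group_Law},
\[
\delta_t(x)*\delta_t(y)=\Bigl(t\underline{x}_1+t\underline{y}_1,\;t\underline{x}_2+t\underline{y}_2,\;t^2 x_3+t^2 y_3+\tfrac{(t\underline{x}_1)\cdot(t\underline{y}_2)-(t\underline{x}_2)\cdot(t\underline{y}_1)}{2}\Bigr).
\]
The key observation is that the symplectic-like term $\underline{x}_1\cdot\underline{y}_2-\underline{x}_2\cdot\underline{y}_1$ is bilinear in the first $2n$ components, hence scales by $t^2$, matching exactly the $t^2$ factor assigned to the third coordinate by $\delta_t$. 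Thus the third entry factors as $t^2\bigl(x_3+y_3+\tfrac{\underline{x}_1\cdot\underline{y}_2-\underline{x}_2\cdot\underline{y}_1}{2}\bigr)$, and the whole expression collapses to $\delta_t(x*y)$.

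There is no genuine obstacle: the anisotropic choice of exponent $2$ on the third variable in \eqref{Dialtions} is precisely designed so that the quadratic cross-term in \eqref{Group_Law} is homogeneous of the same degree as $x_3$ under $\delta_t$. I would therefore present property (4) as the only computation meriting display-math, with a brief remark that this is the abstract reason dilations are group automorphisms of $\dH^n$ (and, more generally, of any Carnot group, where the $j$-th stratum is assigned weight $j$ precisely so that the polynomial group law becomes $\delta_t$-equivariant).
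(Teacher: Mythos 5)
Your proposal is correct and follows exactly the route the paper intends: the authors state that the properties ``can be easily checked in the Heisenberg group by using formulas \eqref{Group_Law} and \eqref{Dialtions}'', and your direct componentwise verification --- in particular the observation that the bilinear cross-term $\underline{x}_1\cdot\underline{y}_2-\underline{x}_2\cdot\underline{y}_1$ scales by $t^2$, matching the weight of the third coordinate --- is precisely that check. Nothing is missing.
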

We now recall the notion of {\em homogenous dimension}. In a general Carnot group $\G$, the homogenous dimension is the natural number  
$\mathcal{Q}:=\sum_{i=1}^r i\, \textrm{dim}\, \cg_i$,
 where $r$ is the step of the  stratified  associated Lie algebra $\cg$
  (see e.g. \cite{BLU} for more details).
In $\dH^n$ one can easily show that
$$
\mathcal{Q}=2n+2.
$$
The homogeneous dimension  is correlated to the scaling of measures since it coincides with the Hausdorff dimension w.r.t. every homogeneous metric. 
 In the paper we always indicate simply by $|A|$ the $N$-dimensional Lebesgue measure of the Borel set $A$ of $\R^N$. Then for all $t>0$,
$|t\; A|=t^N|A|$ while
one can easily show that
\begin{equation}
\label{Volume}
A^t:=\delta_{t}(A)
\quad
\Rightarrow
\quad
|A^t|=t^\mathcal{Q} |A|.
\end{equation}

 Since the Heisenberg group (as all Carnot groups) is non-abelian, translations to the right or to the left determine two different families of homeomorphism on the group. As standard in this setting, we consider the {\em left-translations}, which are defined, for all $y\in \R^N$ as 
 $L_y:\R^N\to \R^N$ with
 $$
 L_y(x):=y* x,
 $$
where $*$ is the group operation defined in \eqref{Group_Law}.\\

Using the left-translations it is possible to define a sub-Riemannian structure on each Carnot group by introducing a suitable family of  left-invariant vector fields spanning to the first layer of the Lie algebra stratification. We omit the general definition on Carnot groups (see e.g. \cite{BLU}). In the specific case of the Heisenberg group, the vector fields can be found as 
\begin{equation}
\label{vectorFields_N}
X_i(x)=dL_x(e_i),
\end{equation}
 where $e_i$ are the unit vectors of the standard Euclidean basis on $\R^N$ for $i=1,\dots, 2n$. 
 One can also easily show that, for all $j=1,\dots,n$
 $$
 X_{2n+1}(x)=dL_x(e_{2n+1})
 =\big[
 X_j,X_{n+j}
 \big](x),
 $$
 where $\big[\cdot,\cdot\big]$ are the standard Lie brackets (called also commutators) defined for vector fields. 
 In the case $n=1$ the vector fields are 
\begin{equation}\label{vectorFields}
 X_1(x)=\left(\begin{array}{c}1 \\0 \\
 -\frac{x_2}{2}\end{array}\right)\quad
\textrm{and}
\quad 
X_2(x)=\left(\begin{array}{c}0 \\1 \\\frac{x_1}{2}\end{array}\right),
\quad \forall\, x=(x_1,x_2,x_3)\in \R^3.
\end{equation}
We recall that the previous vector fields are left-invariant by definition.
For later use we introduce the following simplified notation: given any function $u:\R^N\to \R$, the translation $L_z$ of the function  $u$ is simply $u\circ L_z$, i.e.
$$
L_y(u)(x):=u(y*x).
$$
Thus $X_i$ is a left-invariant vector field if   for all $u\in C^{\infty}(\R^N)$ and for all fixed $ y\in \R^N$
\begin{equation}\label{LeftInvariant_vectorFields}
X_i(  L_y(u))(x)=(X_iu)\,(y* x), \quad
\forall x\in \R^N,
\end{equation}
(while this is in general false considering instead the right-translations).\\
We recall that the vector fields $X_i$ for $i=1,\dots, 2n$ span a bracket generating distribution with step 2 (see e.g. \cite{montgomery} for some details).

The previous vector fields allow us to define  derivatives of any order, just considering how a vector field acts on smooth functions.  
 Given a function $u:\R^N\to \R$, we denote
 the \emph{horizontal gradient}  of $u$ 
  by 
\begin{equation}\label{horGradient}
\nabla_\cX u = (X_1 u,\dots, X_{2n}u)^T.
\end{equation}

In the case of $n=1$ the horizontal gradient can be explicitly  written as
$$
\nabla_{\cX} u=\begin{pmatrix}
u_{x_1}-\frac{x_2}{2}u_{x_3}\\
u_{x_2}+\frac{x_1}{2} u_{x_3}
\end{pmatrix}\in \R^2.
$$

We now recall that a differential operator $\mathcal{L}$ on a the Heisenberg group  is called homogeneous of degree $\kappa$ if for every $u\in C^\infty(\R^N)$ one has
\[
\mathcal{L}(\delta_t u) = t^\kappa \delta_t(\mathcal{L}u),
\]
where  the scaled function is defined as $\delta_t u(x) := u(\delta_t(x))$, for all $x\in \R^N$.

Then we  have the following result.

\begin{lemma}\label{L:xjhom}
For every $i = 1,...,2n$, each left-invariant vector field $X_i$, defined  in \eqref{vectorFields_N},
is  homogeneous of degree $\kappa = 1$, i.e., for any $u\in C^\infty(\R^N)$ one has
\[
X_i(\delta_t u) = t \delta_t(X_i u).
\]
\end{lemma}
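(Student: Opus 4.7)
The plan is to combine the left-invariance of $X_i$ with property (4) of Lemma \ref{DilationsProperties}, i.e.\ that dilations are group homomorphisms. Since $X_i$ is left-invariant and, by \eqref{vectorFields_N}, $X_i(0)=e_i$, one has the flow representation
$$
X_i u(x)=\frac{d}{ds}\bigg|_{s=0} u(x*s e_i),\qquad i=1,\dots,2n,
$$
for every $u\in C^\infty(\R^N)$ and $x\in\R^N$. First I would apply this identity to the scaled function $\delta_t u = u\circ \delta_t$, obtaining
$$
X_i(\delta_t u)(x) = \frac{d}{ds}\bigg|_{s=0} u\bigl(\delta_t(x*se_i)\bigr).
$$

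Next I would use Lemma \ref{DilationsProperties}(4) to transport the dilation through the group law: $\delta_t(x*se_i)=\delta_t(x)*\delta_t(se_i)$. Since $i\le 2n$, the vector $se_i$ has vanishing last component, so the anisotropic scaling reduces to the isotropic one and $\delta_t(se_i)=(st)e_i$. Substituting $\tau=st$ in the derivative gives
$$
X_i(\delta_t u)(x) = \frac{d}{ds}\bigg|_{s=0} u\bigl(\delta_t(x)*(st)e_i\bigr) = t\,\frac{d}{d\tau}\bigg|_{\tau=0} u\bigl(\delta_t(x)*\tau e_i\bigr) = t\, X_iu(\delta_t(x)) = t\,\delta_t(X_iu)(x),
$$
which is the claim.

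The argument is essentially a one-line computation once the right ingredients are lined up; the only subtle point is that the conclusion genuinely uses $i\le 2n$, because only then does $e_i$ sit inside the horizontal layer where the dilation acts as ordinary scalar multiplication. (A strictly equivalent proof can be given by direct chain rule on the explicit formula for $X_i$: each $X_i$ reads $\partial_{y_i}+L_i(x)\partial_{x_3}$ with $L_i$ linear in the first $2n$ coordinates, so $\partial_{y_i}$ contributes a factor $t$, $\partial_{x_3}$ a factor $t^2$, and the identity $L_i(\delta_t x)=tL_i(x)$ balances these into the single factor $t$ in front of $\delta_t(X_iu)$.) I do not anticipate a real obstacle here, as both routes are short; the main conceptual point is simply to recognise that homogeneity of degree $1$ is the infinitesimal counterpart of Lemma \ref{DilationsProperties}(4) restricted to the horizontal directions.
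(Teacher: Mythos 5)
Your proof is correct. Note that the paper itself does not write out a proof of this lemma: it simply remarks that the statement ``is a very simple computation in the Heisenberg group'' and refers to Folland for general Carnot groups; the computation it has in mind is exactly the chain-rule argument you sketch in your parenthetical remark (in $\dH^1$, $X_1(\delta_t u)=t\,u_{x_1}(\delta_t x)-\tfrac{x_2}{2}t^2u_{x_3}(\delta_t x)=t\,(X_1u)(\delta_t x)$, using that the coefficient $-\tfrac{x_2}{2}$ is homogeneous of degree one under $\delta_t$). Your primary route is a genuinely different and somewhat more structural argument: writing $X_iu(x)=\frac{d}{ds}\big|_{s=0}u(x*se_i)$, which is legitimate since the curve $s\mapsto L_x(se_i)$ has velocity $dL_x(e_i)=X_i(x)$ at $s=0$, and then pushing the dilation through the group law via Lemma \ref{DilationsProperties}(4) together with $\delta_t(se_i)=(st)e_i$ for $i\le 2n$. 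What this buys is that the proof transfers verbatim to any Carnot group (homogeneity of the horizontal layer under dilations is all that is used), whereas the coordinate computation must be redone for each explicit presentation of the vector fields; the cost is only the small preliminary step of justifying the flow representation. Both routes are sound, and you correctly isolate the one point where $i\le 2n$ is essential.
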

The proof is a very simple computation in the Heisenberg group while for general Carnot groups the reader can find a proof e.g. in \cite{Fo}.\\

This in particular implies that
the horizontal 
gradient is homogeneous of degree one with respect to the dilations $\delta_\lambda$, i.e., for every $u\in C^\infty(\R^N)$ we have
\begin{equation}\label{laphom}
\nabla_{\cX} (\delta_t u)
= t
\delta_t\big( \nabla_{\cX} u\big).
\end{equation}

For later use,  
it is very useful to  introduce the $
N\times 2n$-matrix associated to the vector fields, that is
 \begin{equation}
\label{MatrixHeisenberg}
\sigma:=\big(X_1,\dots,X_{2n}\big),
\end{equation}
where 
 $X_{i}$ are the left-invariant vector fields defined in \eqref{vectorFields_N}
  and   the {\em extended matrix of vector fields}, which is the $
N\times N$-matrix
\begin{equation}
\label{MatrixHeisenberg_extended}
\sigma_{Ext}:=\big(X_1,\dots,X_{2n}, X_{2n+1}\big),
\end{equation}
where $X_{2n+1}(x)=dL_x(e_{2n+1})$, and $e_{2n+1}$ is the unit vector spanning the  $2n+1$-direction (and associated to the second layer of the stratification for the Lie algebra).
\begin{ex}
In the 1-dimensional Heisenberg group $\dH^1$, the matrix $\sigma$ is the $2\times 3$-matrix given by 
\begin{equation*}
\sigma(x_1,x_2,x_3)=
\begin{pmatrix}
1 &0\\
0&1\\
 -\frac{x_2}{2} &
\frac{x_1}{2}
\end{pmatrix},
\end{equation*}
while
$\sigma_{Ext}$ is the $3\times 3$-matrix given by 
\begin{equation*}
\sigma_{Ext}(x_1,x_2,x_3)=
\begin{pmatrix}
1 &0 &0\\
0&1&0\\
 -\frac{x_2}{2} &
\frac{x_1}{2} &1
\end{pmatrix}.
\end{equation*}
\end{ex}
A trivial computation shows the following property, which will be very useful later:
given the quadratic matrix defined in \eqref{MatrixHeisenberg_extended}, then 
\begin{equation}
\label{DeterminantMatrixExt}
\textrm{det}\big(\sigma_{Ext}(x)\big)=1,
\quad \forall\, x\in \R^N.
\end{equation}
The property above means that the left-translations are an  isometry for the associated $L^p$-spaces, i.e. informally setting for all fixed $z\in \R^N$ $y:=L_z(x)=z* x$, we have $dy=dx$.
\begin{rem}
Property \eqref{DeterminantMatrixExt} can be generalised to all Carnot groups in exponential coordinates or to more general Carnot-type groups (see e.g. \cite{BD1} for properties and definitions of Carnot-type groups). 
\end{rem}
\begin{rem}
\label{derivateSigma} 
Trivially $\nabla_{\cX}u=\sigma^T \nabla u$ 
where $\nabla u$ 
denotes   
the standard (Euclidean) gradient  
of $u$.

\end{rem}

\subsection{Periodicity in the Heisenberg group.}

Being the Heisenberg group a Lie group,
a very natural notion of periodicity can be introduced by left-translations,
see 
\cite{{BW},{BMT},{Franchi1},{Franchi2}}.
We refer also to the Phd thesis \cite{Jama} where periodicity in the Heisenberg group (but also in more general structures as Grushin spaces) is studied in  details with many properties and examples.
Given any $\Omega\subset \R^N$, we say that $\Omega$ is H-periodic with period $T>0$,
 whenever $L_{Tk}(\Omega)=(Tk)* \Omega = \Omega$ for all $k\in \Z^N$.\\
 For later use in the paper we  fix the period $T=2$. In fact, recalling that $L_{y}\circ \, L_{z}(x)=
 L_{y* z}(x)$,  the composition of two left-translations with period $T=1$  is not anymore a integer left-translation since $k* h \notin \Z^N$, because the third component becomes
 $
 k_3+h_3+
 \frac{k_1h_2-k_2h_1}{2}
 $, which is in general not anymore an integer.

 Instead the composition of two left-translations with period 2 is still a translation of the same type since,
  for all $k, h\in \Z^N$,
 $2k* 2h=2z$ 

 with $z=\left(\underline{k}_1+\underline{h}_1,\underline{k}_2+\underline{h}_2,
 k_3+h_3+\underline{k}_2\cdot
 \underline{h}_1
 -\underline{k}_1\cdot
 \underline{h}_2
 \right)\in \Z^N$.
 (Note that $z\neq k*h$ since the third component is different by a factor $\frac{1}{2}$ in the mixed term.)
 One could very simply  adapt everything to period $T=1$ by choosing a different representation of the Heisenberg group, where the group law is expressed by polynomials with integer coefficients; in that case the unit cell needs to be rescaled   to a unit cube (e.g. $[-\frac{1}{2},\frac{1}{2}]^N$), see e.g. \cite{Franchi1}.
 
 We introduce the following simplified notation for the left-translations with period 2, that is
\begin{equation}
\label{TranslazioneImportante}
\tau_k(x):=2k* x,
\quad \forall\,k\in \R^N, \; x\in \R^N.
\end{equation}
We recall that, for all $k,h\in \R^N$, the following properties hold true:
$$
 \tau_k\circ\tau_h
=\tau_{k* h}\quad
\textrm{and}
\quad
\tau_k^{-1}=\tau_{-k}.
$$

A definition of periodicity adapted to the Heisenberg group structure can be given for functions as follows.
\begin{defi}
\label{DefPeriodicFunction}
We say that the function
$f:\R^N\to \R$  is $H$-periodic  whenever
$$
f(\tau_k(x))=f(2k* x)=f(x),
\quad 
\forall\; x\in \R^N, k\in \Z^N.
$$
\end{defi}
To construct a large class of periodic functions we need to introduce a H-periodic tiling of $\R^N$.
Thus we consider the semiopen cube $Q=[-1,1)^N$. We call $Q$ {\em unit cell} and consider
$\tau_k(Q)
=2k*Q$.
Then one can easily show that the family
$
\big\{\tau_k(Q)\big\}_{k\in \Z^N}
$
fullfills
\begin{equation}
\label{propertytiling}
\bigcup_{k\in \Z^N}\tau_k(Q)=\R^N
\quad
\textrm{and}\quad
\tau_k(Q)\cap
\tau_{h}(Q)=\emptyset,
\; \forall\, k\neq h.
\end{equation}
see Figure \ref{Fig_tilining_Projected}  and   \cite[Lemma 2.4]{Franchi2}.

\begin{figure}[htbp]
\centerline{
\includegraphics[scale=0.55]{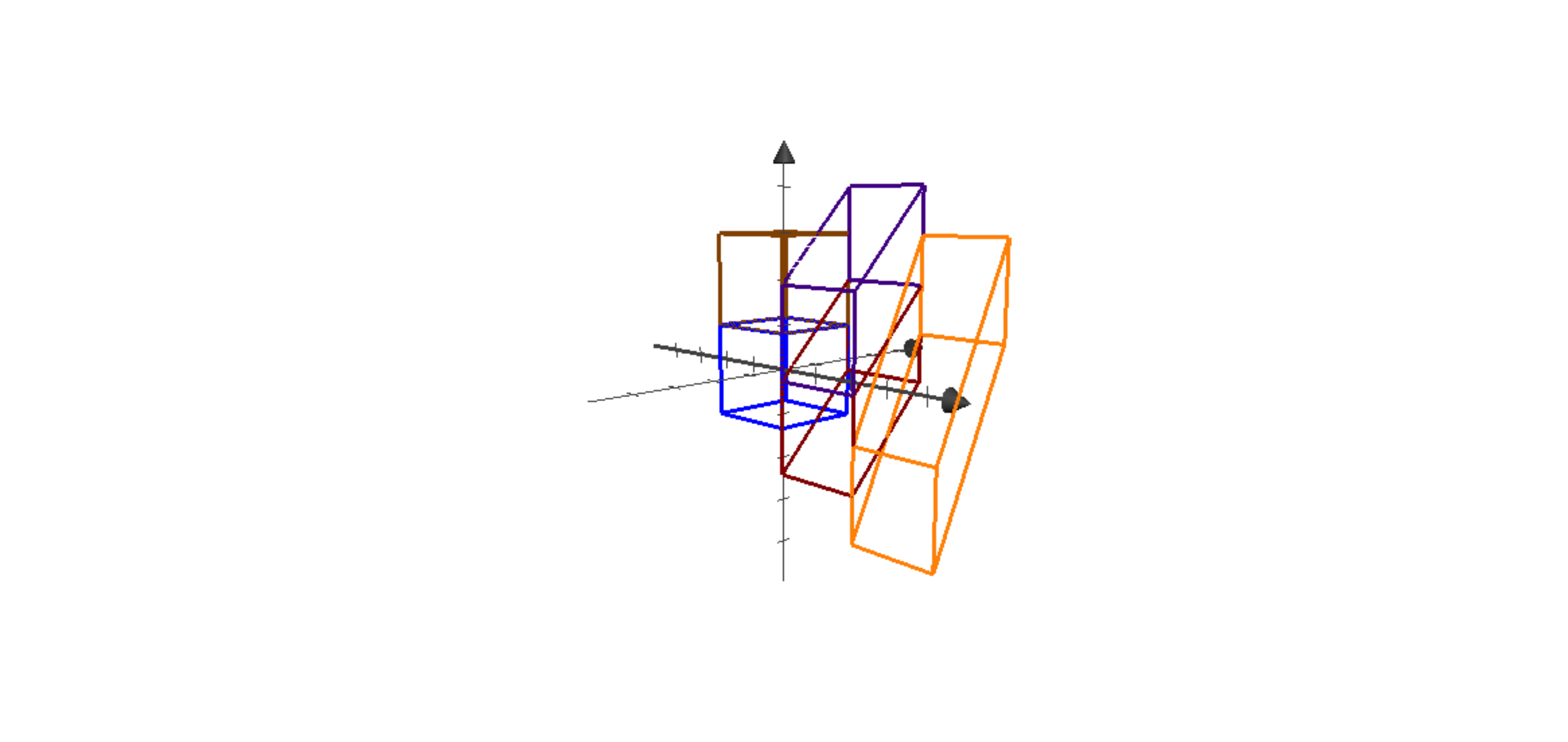}}
\caption{Tiling in $\dH^1$ constructed  by translating  $Q=[-1,1)^3$.}
\label{Fig_tilining_Projected}
\end{figure}

\begin{figure} [htbp]
\begin{center}
\includegraphics[scale=0.33]{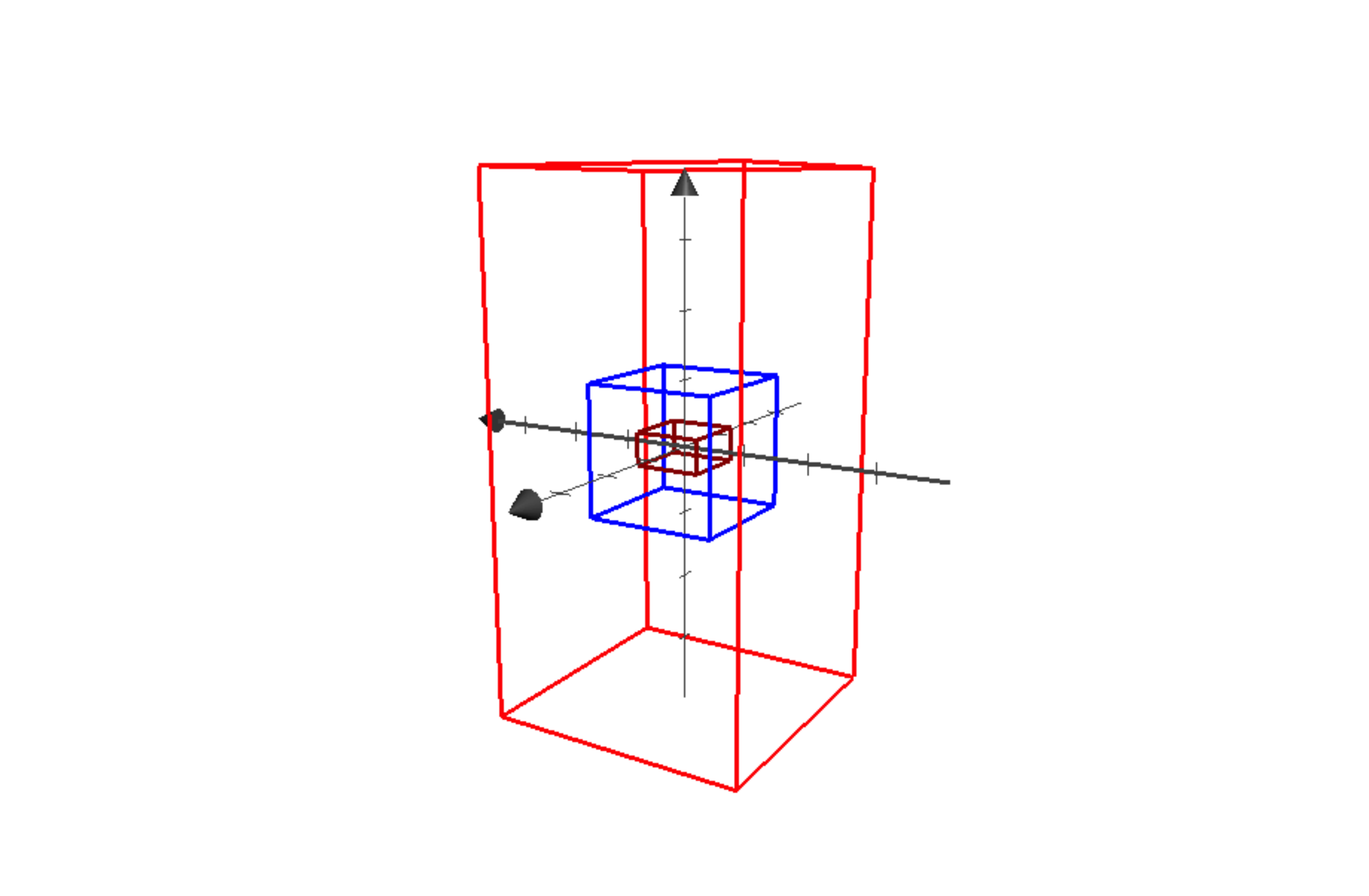}
\caption{Rescaling of the unit cell $Q=[-1,1)^3$ (which is the blue cube)  w.r.t. the dilations in the 1-dimensional Heisenberg group: in particular in red one can see $\delta_2(Q)$ while in bordeaux one can see $\delta_{\frac{1}{2}}(Q)$.}
  \label{Figura-Dilations}
  \end{center}  
\end{figure}

We next want to highlight a few facts about the scaling of tilings since this will be crucial later when we will study our homogenisation problem.
First recall that, in the Heisenberg group, if we scale the unit cell, then we do not get anymore hypercubes but hyper-rectangles since the scaling is anisotropic. In Figure \ref{Figura-Dilations} we show how the cube $Q$ scales for $t>1$ and for $t<1$.
Then if we want to build a tiling of $\R^N$  starting by a rescaled cell, we need to be very careful and adapt the translations to the Heisenberg  scaling.
\begin{lemma}
Given the unit cell  $Q=[-1,1)^N$ and a $t>0$,
the scaled unit cell as $$Q^t=\delta_t(Q)\quad \textrm{and}\quad Q^t_k:=\tau_{\delta_t(k)}\big(Q^t\big),$$
then the family
$
\big\{Q^t_k\big\}_{k\in \Z^N}
$
is  a tiling of $\R^N$ in the sense that
\begin{equation}
\label{propertytiling_rescaled}
\bigcup_{k\in \Z^N}Q^t_k
=\R^N
\quad
\textrm{and}\quad
Q^t_k\cap
Q^t_h=\emptyset,
\; \forall\, k\neq h.
\end{equation}
\end{lemma}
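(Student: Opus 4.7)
The plan is to reduce the claim to the already-established tiling property \eqref{propertytiling} by showing that the rescaled tiles are exactly the images of the original tiles under the dilation $\delta_t$. The key identity to establish first is
\[
Q^t_k \;=\; \delta_t\bigl(\tau_k(Q)\bigr) \qquad \text{for every } k\in\Z^N.
\]
Once this identity is in hand, the two conclusions in \eqref{propertytiling_rescaled} follow immediately from \eqref{propertytiling}: applying the bijection $\delta_t$ (it is a homeomorphism of $\R^N$ by Lemma \ref{DilationsProperties}) to both sides of $\bigcup_{k}\tau_k(Q)=\R^N$ gives the covering property, and since $\delta_t$ is injective, $\tau_k(Q)\cap\tau_h(Q)=\emptyset$ transfers to $\delta_t(\tau_k(Q))\cap\delta_t(\tau_h(Q))=\emptyset$.

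Next I would prove the displayed identity. First observe that
\[
\delta_t(2k) \;=\; \delta_t(2\underline{k}_1,2\underline{k}_2,2k_3) \;=\; (2t\underline{k}_1,\,2t\underline{k}_2,\,2t^2 k_3) \;=\; 2\,\delta_t(k),
\]
so dilation commutes with the factor $2$ in the definition \eqref{TranslazioneImportante} of $\tau_k$. Then, using property (4) of Lemma \ref{DilationsProperties} (namely $\delta_t(a*b)=\delta_t(a)*\delta_t(b)$), one computes for any $y\in\R^N$
\[
\delta_t\bigl(\tau_k(y)\bigr) \;=\; \delta_t(2k*y) \;=\; \delta_t(2k)*\delta_t(y) \;=\; 2\,\delta_t(k)*\delta_t(y) \;=\; \tau_{\delta_t(k)}\bigl(\delta_t(y)\bigr).
\]
Applying this with $y$ ranging over $Q$ and using $Q^t=\delta_t(Q)$ yields
\[
\delta_t\bigl(\tau_k(Q)\bigr) \;=\; \tau_{\delta_t(k)}\bigl(\delta_t(Q)\bigr) \;=\; \tau_{\delta_t(k)}(Q^t) \;=\; Q^t_k,
\]
which is the desired identity.

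The proof contains no real obstacle: the argument is entirely structural and relies only on the fact that dilations are group automorphisms that commute with left-translations up to rescaling of the translation parameter. The only point one has to be slightly careful about is conceptual rather than technical: the rescaled tile $Q^t_k$ is indexed by integer $k\in\Z^N$, but the actual translation parameter $2\delta_t(k)$ is in general \emph{not} in $2\Z^N$, which is precisely why the tiling property cannot be read off directly from \eqref{propertytiling} without first passing through the dilation as above.
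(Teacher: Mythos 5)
Your proof is correct and follows essentially the same route as the paper's: both reduce to the $t=1$ tiling via the identity $\delta_t(k * Q)=\delta_t(k)*\delta_t(Q)$ (property (4) of Lemma \ref{DilationsProperties}) together with the fact that $\delta_t$ is a bijection of $\R^N$. Your version merely spells out the intermediate step $\delta_t(2k)=2\,\delta_t(k)$, which the paper leaves implicit.
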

\begin{proof}
The result follows easily from the properties for $t=1$ and  from the fact that
$\delta_t(k)*\delta_t(Q)=\delta_t(k*Q).$
\end{proof}

\section{A class of degenerate functionals.}
\label{Main result and notations.}

Affine functions can be introduced in different ways in the Heisenberg group setting and they have been  studied in \cite{{D-G-N},{BD2}}.
For the purpose of the paper, we say that a function $u:\R^N\to \R$ is $H$-affine (in the Heisenberg group) if
$$
u(x)=q\cdot \pi_{m} (x)+a,
$$
 for  $m=2n$ and for some $q\in \R^{m}$ and $a\in \R$,  where $\pi_{m} :\R^{N}\to \R^{m}$ is the projection on the first $m$ components and $\cdot$ is the standard inner product on $\R^{m}$.   The following lemma is an immediate  property of $H$-affine functions in all Carnot-type groups and it will be key  for  our later results.
 \begin{lemma}
 \label{affineLemma} For all fixed $q\in \R^{m}$, we have
 $$
 \nabla_{\cX} u=q
 \quad
 \Leftrightarrow
 \quad 
 u(x)=q\cdot \pi_{m} (x)+a,
 $$
 for some $a\in \R$ and for all $x\in \R^N$.
 \end{lemma}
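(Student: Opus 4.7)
The plan is to prove both implications of the equivalence by exploiting the explicit form of the horizontal vector fields and the fact, recalled right before the statement, that the extended matrix $\sigma_{Ext}$ has determinant one.

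For the implication $(\Leftarrow)$, I would simply compute directly. Writing $u(x) = q \cdot \pi_m(x) + a = \sum_{i=1}^{2n} q_i x_i + a$, the standard Euclidean gradient is $\nabla u = (q,0)^T \in \R^N$, where the last component vanishes because $u$ does not depend on $x_{2n+1}$. Using Remark \ref{derivateSigma}, namely $\nabla_{\cX} u = \sigma^T \nabla u$, and the explicit form of $\sigma$ (whose first $2n$ rows form the identity and whose $(2n{+}1)$-st row multiplies the vanishing component $\partial_{x_{2n+1}} u$), we immediately get $\nabla_{\cX} u = q$.

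For the implication $(\Rightarrow)$, I would reduce to a constancy statement. Set
\[
v(x) := u(x) - q \cdot \pi_m(x).
\]
By the $(\Leftarrow)$ direction just proved, $\nabla_{\cX}(q\cdot\pi_m) = q$, so $X_i v = 0$ for every $i = 1,\dots,2n$. The key observation is then that the missing direction $X_{2n+1}$ is automatically annihilated by $v$ as well: since $X_{2n+1} = [X_j, X_{n+j}]$ for any $j=1,\dots,n$, we have
\[
X_{2n+1} v = X_j(X_{n+j} v) - X_{n+j}(X_j v) = 0.
\]
Thus $\sigma_{Ext}^T \nabla v \equiv 0$. Invoking \eqref{DeterminantMatrixExt}, the matrix $\sigma_{Ext}(x)$ is invertible at every $x\in\R^N$, which forces $\nabla v \equiv 0$ in the classical (or distributional) sense, so $v$ is constant, say $v \equiv a$. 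This gives $u(x) = q\cdot \pi_m(x) + a$, as required.

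The only subtle point, and the one I would address with care, is the regularity under which the bracket computation is meaningful: if $u$ is assumed smooth (or at least $C^2$), the argument above is immediate; if $u$ is only a Sobolev function with $\nabla_{\cX}u = q$ a.e., one should interpret $X_iv = 0$ distributionally and apply a standard mollification by left-translations (which preserve the $X_i$ by \eqref{LeftInvariant_vectorFields}) to reduce to the smooth case, then pass to the limit. No other obstacle arises, since the non-trivial geometric input — the bracket-generating property together with $\det(\sigma_{Ext}) = 1$ — has been recorded in the preliminaries.
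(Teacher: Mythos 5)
Your proof is correct and follows essentially the same route as the paper: both directions hinge on the commutator identity $X_{2n+1}=[X_j,X_{n+j}]$ to show the vertical derivative vanishes, after which the horizontal gradient reduces to the Euclidean one. Your repackaging via $v=u-q\cdot\pi_m$ and the invertibility of $\sigma_{Ext}$ is only a cosmetic variant of the paper's direct computation, and your added remark on the Sobolev-regularity of $u$ is a reasonable refinement the paper leaves implicit.
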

 \begin{proof}
 One implication (from the right to the left) 
 follows trivially from the fact that  $q\cdot \pi_{m} (x)+a$ does not depend on the last coordinate and the structure of the horizontal gradient.\\ 

 The other implication follows from the fact that $ \nabla_{\cX} u=q$ means $X_iu(x)=q_i=$constant for all $i=1,\dots,m$, then 
 $$
 u_{2n+1}=[X_1,X_{n+1}]u =X_1(X_{n+1}u)-X_{n+1}(X_1u)=X_1(q_{n+1})-X_{n+1}(q_1)=0,
 $$
 where we indicate by $u_i$ the partial derivative of $u$ w.r.t. the variable $x_i$, for $i=1,\dots,N$.
Using $u_{2n+1}(x)=0$, for all $x\in \R^N$, $X_iu(x)=q_i$ implies $u_{i}(x)=q_i$ for all $i=1,\dots, 2n$, which gives
 $u(x)=q\cdot \pi_{m} (x)+a$  for some $a\in \R$.
 \end{proof}
 We will later often use the following notation  for H-linear functions:
\begin{equation}
\label{H-linear}
 l_q(x)=q\cdot \pi_{m} (x).
\end{equation}

We next recall that the definition of Sobolev spaces in the setting of H\"ormander vector fields, which applies in particular to the Heisenberg group. We refer to \cite{rs, Xu-sem-94} 

 for more details on these spaces.

Let $k>1$ be an integer, $1\le \alpha \le + \infty $ and $A$ a domain on $\R^N$. We define the space 
$$
W_{\cX}^{k,\alpha}(A) = 
\left\{  
u \in L^\alpha(A) \;\big|
 \; \cX^J u \in L^\alpha(A) , \; \forall J \in \N^m, \; |J| \le k  \right\}
$$
where $\cX^Ju=X_1^{J_1}\dots  X_{m}^{J_{m}}u$ for $J=(J_1,\dots,J_{m})$.
Endowed with the norm
$$
\norma{u}_{ W_{\cX}^{k,\alpha} (A)}
= \left( \sum_{|J|\le  k }  \int_{A } 
|\cX^J u |^\alpha \, dx  \right)^{1/\alpha},   
$$
$W_{\cX}^{k,\alpha}(A)$ is a Banach space, and is an Hilbert space in the case $\alpha=2$.\\

Moreover, for any $1\le \alpha<+ \infty$, the embeddings
$$
W_{\cX}^{k, \alpha}(A) \hookrightarrow W^{k/r, \alpha}(A) \,,
$$
 hold true, where $r$ is the step of the  stratified  associated Lie algebra, thus in Heisenberg group $r=2$
 (see e.g. \cite{Xu90-var}). 
Later we will also need the following compact embedding. 
\begin{lemma}\label{Lp-emb}
 $W_{\cX}^{k, \alpha}(A)$
is compactly embedded into $L^{\alpha}(A)$. 
\end{lemma}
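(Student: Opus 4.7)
The plan is to chain together the continuous subelliptic-to-fractional embedding with the classical Rellich--Kondrachov theorem for fractional Sobolev spaces. Concretely, the lemma is stated right after recalling that
\[
W_{\cX}^{k,\alpha}(A) \hookrightarrow W^{k/r,\alpha}(A),
\]
where $r=2$ is the step of the stratification in $\dH^n$. Since $k\geq 1$, the fractional exponent $s:=k/r=k/2$ is strictly positive. This is the whole point of recalling that embedding just before stating compactness.

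Next I would invoke the standard Rellich--Kondrachov compactness theorem in the fractional Sobolev setting: on a bounded domain $A\subset \R^N$ with sufficient boundary regularity (Lipschitz is enough, which is implicit in the way these functionals are posed), the inclusion
\[
W^{s,\alpha}(A) \hookrightarrow\hookrightarrow L^{\alpha}(A)
\]
is compact for every $s>0$ and every $1\leq \alpha<+\infty$. Composing a bounded linear map with a compact one yields a compact map, so the composition
\[
W_{\cX}^{k,\alpha}(A) \hookrightarrow W^{k/2,\alpha}(A) \hookrightarrow\hookrightarrow L^{\alpha}(A)
\]
is compact, which is exactly the claim.

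Operationally, to check compactness one takes a bounded sequence $\{u_j\}\subset W_{\cX}^{k,\alpha}(A)$; by the continuous embedding above it is bounded in $W^{k/2,\alpha}(A)$; by fractional Rellich--Kondrachov one extracts a subsequence converging in $L^{\alpha}(A)$. The main (and essentially only) subtle point is the first embedding, which is the nontrivial subelliptic ingredient coming from the works of Rothschild--Stein and Xu cited in the paper; granting this, the compactness reduces to a standard fact from functional analysis. If the domain $A$ is not assumed Lipschitz, one would instead argue locally via an extension, or localise by multiplying by a smooth cutoff supported in a slightly larger Lipschitz set, and then invoke fractional Rellich--Kondrachov on that enlarged set.
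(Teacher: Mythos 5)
Your argument is exactly the one the paper uses: compose the continuous embedding $W_{\cX}^{k,\alpha}(A)\hookrightarrow W^{k/r,\alpha}(A)$ (with $r=2$) with the compact embedding of the fractional Sobolev space $W^{k/2,\alpha}(A)$ into $L^{\alpha}(A)$, as in Di Nezza--Palatucci--Valdinoci. Your version just spells out the subsequence extraction and the boundary-regularity caveat that the paper leaves implicit.
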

\begin{proof}
This follows from the previous embedding and the fact that the 
fractional Sobolev space  $W^{k/r, \alpha}(A)$ is compactly embedded into $L^p(A)$ (see e.g. \cite{DPV12}).  
\end{proof}

\begin{defi}
For each domain $A\subset \R^N$, we indicate by
$$
W_{\cX,0}^{k,\alpha}(A)
$$
the closure of $C^{\infty}_0(A)$ w.r.t. the Sobolev norm
$\norma{\cdot}_{ W_{\cX}^{k,\alpha} (A)}$.
\end{defi}
 This means that, whenever the  boundary $\partial A$ is regular enough, the trace of $u$ vanishes on the boundary of the set.\\
We will use this notation to express the Dirichlet boundary conditions:
 more precisely
$$
u-u_0\in W_{\cX,0}^{k,\alpha}(A)
$$
are all the functions 
$u\in W_{\cX}^{k,\alpha}(A)$ 
   which coincide  on $\partial A$
   (in the sense of Sobolev space) 
   with  some $u_0\in W_{\cX}^{k,\alpha}(A)$.   
 
 Next we recall the following Poincar\'e inequality, which is key for later results.
 
 \begin{lemma}
 \label{poincare}
 Given a bounded domain $A\subset \R^N$,
 then there exists a constant $C>0$ such that
 $$
 \int_A |u|^{\alpha} dx\leq C
  \int_A |\nabla_{\cX} u|^{\alpha} dx,
  \quad
    \forall\;  u\in W^{1,\alpha}_{\cX,0}(A).
 $$
 \end{lemma}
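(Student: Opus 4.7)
The plan is to prove the inequality first for smooth compactly supported functions in $A$ and then extend it by density, since by definition $C^\infty_0(A)$ is dense in $W^{1,\alpha}_{\cX,0}(A)$ with respect to the norm $\|\cdot\|_{W^{1,\alpha}_\cX(A)}$; both sides of the claimed inequality are continuous in this norm (the right-hand side trivially, the left-hand side thanks to $W^{1,\alpha}_\cX(A)\hookrightarrow L^\alpha(A)$), so the estimate will pass to the closure.

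Fix then $u\in C^\infty_0(A)$ and extend it by zero outside $A$; since $A$ is bounded, choose $L>0$ with $A\subset[-L,L]^N$. I would exploit the integral curves of the left-invariant vector field $X_1$. By \eqref{vectorFields_N} and the group law \eqref{Group_Law}, the flow of $X_1$ at time $s$ is the right translation $\phi_s(x)=x*(se_1)$, which only modifies the first and the last coordinate in a controlled (affine) way. Hence there exists $T=T(L)>0$, independent of $x\in A$, such that $\phi_{-T}(x)\notin[-L,L]^N$, and therefore $u(\phi_{-T}(x))=0$. The fundamental theorem of calculus along the flow then yields
$$
|u(x)|=\left|\int_{-T}^{0}X_1u(\phi_s(x))\,ds\right|\leq\int_{-T}^{0}|X_1u(\phi_s(x))|\,ds,
$$
and Jensen's inequality in the variable $s$ upgrades this (for any finite $\alpha\geq 1$) to
$$
|u(x)|^{\alpha}\leq T^{\alpha-1}\int_{-T}^{0}|X_1u(\phi_s(x))|^{\alpha}\,ds.
$$

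Next I would integrate in $x\in A$, apply Fubini, and change variables $y=\phi_s(x)$ in the inner integral. The crucial fact is that each $\phi_s$ preserves Lebesgue measure: the Jacobian of $x\mapsto x*(se_1)$ is lower triangular with $1$'s on the diagonal (equivalently, Lebesgue measure is the bi-invariant Haar measure on $\dH^n$, a fact already recorded in \eqref{DeterminantMatrixExt}). Combined with $|X_1u|\leq|\nabla_{\cX}u|$ and the fact that $X_1u$ is supported in $A$, this gives
$$
\int_A|u|^{\alpha}\,dx\leq T^{\alpha-1}\int_{-T}^{0}\int_{\R^N}|X_1u(y)|^{\alpha}\,dy\,ds\leq T^{\alpha}\int_A|\nabla_{\cX}u|^{\alpha}\,dx,
$$
which is the claimed inequality with $C=T^{\alpha}$.

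The main technical point is the measure preservation of the horizontal flow: this is where the Heisenberg (more generally Carnot) structure enters the argument decisively, and it is precisely this property that allows an essentially one-dimensional integration-along-characteristics argument to succeed despite the fact that $\nabla_{\cX}$ controls only $m=2n<N$ derivatives. The same scheme generalises verbatim to any Carnot group, since on every such group the left-invariant vector fields have measure-preserving right-translation flows.
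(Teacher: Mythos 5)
Your argument is correct, but it is genuinely different from what the paper does: the paper's ``proof'' of Lemma \ref{poincare} is a one-line citation to the subelliptic Poincar\'e inequalities of Xu and Lu, which are established for general H\"ormander systems via representation formulas and hold in much greater generality (weighted versions, no group structure). You instead give a self-contained integration-along-characteristics proof that exploits the Carnot structure decisively: the flow of the single left-invariant field $X_1$ is the right translation $x\mapsto x*(se_1)$, it moves the first Euclidean coordinate at unit speed (so it exits any bounded set in uniformly bounded time $T$), and it preserves Lebesgue measure because its Jacobian is unipotent --- note that the measure invariance you need is that of \emph{right} translations, which is a companion fact to, not literally the content of, \eqref{DeterminantMatrixExt} (that identity concerns $\sigma_{Ext}$ and left translations), though it is equally elementary here. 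With that correction of attribution, every step checks out: the fundamental theorem of calculus along the flow, Jensen in $s$, Fubini, the measure-preserving change of variables, $|X_1u|\le|\nabla_{\cX}u|$, and the density of $C^\infty_0(A)$ in $W^{1,\alpha}_{\cX,0}(A)$ (which holds by definition of that space, with both sides of the inequality continuous in the $W^{1,\alpha}_{\cX}$ norm). What your route buys is transparency and an explicit constant $C=T^\alpha$ depending only on $\operatorname{diam}(A)$ and $\alpha$, and it transfers verbatim to any Carnot group in exponential coordinates; what it gives up is the generality of the cited results, since it relies on $X_1$ having a complete, measure-preserving, coordinate-translating flow --- a feature of the group setting that fails for general H\"ormander vector fields, which is presumably why the authors defer to \cite{Xu90-var, Lu}.
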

 \begin{proof}
 This follows from the results in \cite{Xu90-var, Lu}.
 \end{proof}

 Consider now a function  $f:\R^N\times \R^m\to \R$ with $N=2n+1$ and $m=2n$, we introduce the integral functional defined, for all  domain $A\subset \R^N$, as
\begin{equation}
\label{F}
F(u,A):=
\left\{\begin{aligned}
\int_A f\left(x,\nabla_{\cX} u\right)\; d\, x,&
\quad
u\in  W_{\cX}^{1,\alpha} (A),\\
+\infty,& \quad\textrm{else}.
\end{aligned}
\right.
\end{equation}
We introduce  the following properties for the integrand function\\
$f:\R^N\times \R^m\to \R$ (with $N=2n+1$ and $m=2n$)
\begin{equation}
\label{assumption1}
f(x,q) \; \textrm{is measurable in}\; x  \; \textrm{and convex in}\; q;
\end{equation}

\begin{equation}
\label{assumption2}
\begin{aligned}
&\exists\;
C_1,C_2>0\; \textrm{and}\; \alpha>1 \; \textrm{such that}\\
&C_1|q|^\alpha\leq
f(x,q)
\leq C_2\left(|q|^\alpha+1
\right),\quad q\in \R^m,\; x\in \R^N.
\end{aligned}
\end{equation}

Moreover for the later homogenisation problem we will assume $H$-periodicity for the functional in the sense of Definition \ref{DefPeriodicFunction}; more precisely
\begin{equation}
\label{assumption-Periodic}
f(\tau_{k}(x),q)=f(2k* x,q)=f(x,q),
\quad \forall\; x\in \R^N,\,q\in \R^m,\,k\in \Z^N.
\end{equation} 

\begin{ex}
\label{MainModel}
The main example is $f(x,q)=a(x)|q|^{\alpha}$, which trivially satisfies \eqref{assumption1} and \eqref{assumption2} whenever $a:\R^N\to \R$ is bounded (with a strictly positive lower bound) and measurable,
while assumption \eqref{assumption-Periodic} is equivalent to requiring that  $a(\cdot)$ is H-periodic.
\end{ex}

We want to study the minimisation problem for $F(u,A)$ with $H$-affine boundary condition, i.e.
\begin{equation}
\label{min}
m(F,u_0,A):=\min
\left\{F(u,A)\,|\, u-u_0\in W_{\cX,0}^{1,\alpha}(A)
\right\},
\end{equation}
with $u_0(x)=q\cdot \pi_{m}(x)+a$ for some  $q\in \R^m$ and $a\in \R$.\\

\begin{rem}
Note that  under assumptions \eqref{assumption1} and \eqref{assumption2},  the infimum of $F(u,A)$ on the set of  functions $u$ such that  
$u-u_0\in W_{\cX,0}^{1,\alpha}(A)$ is indeed a minimum by standard arguments, using the convexity, the embedding in Lemma \ref{Lp-emb} and the  Poincar\'e inequality (see Lemma~\ref{poincare}).
\end{rem}

\section{A $\Gamma$-convergence result for degenerate functionals.}
\label{SectionGamma}
To keep the paper self contained we next  recall briefly the definition of $\Gamma$-convergence.

\subsection{Very brief introduction to $\Gamma$-convergence.}
In homogenisation theory, we consider a family of solutions to equations with rapidly oscillating coefficients and investigate if they converge to a solution of a homogenised equation with slowly oscillating or constant coefficients. If these equations are the Euler-Lagrange equations of a suitable family of functionals with rapidly oscillating coefficients, and if both minimisers and solutions of the Euler-Lagrange equation are unique, then we can study convergence of the family of functionals instead. 

We need a notion of convergence of functionals which guarantees that minimisers of the approximating functionals converge to minimisers of the limit functional.

A suitable mathematical setup to make this rigorous is
the notion of $\Gamma$-convergence.
Let us  briefly recall the definition of $\Gamma$-convergence (see~\cite{BraDef, Bra, DM} for more details on this subject).

\begin{definition}\label{defgammac}
Let $U$ be a metric space and for $\varepsilon >0$ let $F_\varepsilon: U\to \overline{\R}$
be a family of functionals on $U$. 
We say that $F_\varepsilon$ 
$\Gamma$-converge to $F: U\to \overline{\R}$ 
if the following conditions are verified:
\begin{enumerate}
\item for all $u\in U$ and for all $u_\varepsilon\to u$, there holds 
$\displaystyle{\liminf_{\varepsilon\to 0}}F_\varepsilon(u_\varepsilon)\ge F(u)$ \\ \emph{($\Gamma$-liminf inequality);}
\item for all $u\in U$ there exist $u_\varepsilon\to u$, such that 
$\displaystyle{\lim_{\varepsilon\to 0}}F_\varepsilon(u_\varepsilon)= F(u)$ \\ \emph{($\Gamma$-limsup inequality).}
\end{enumerate}
\end{definition}

The convergence of minimisers to minimisers is formalised in the following way, 
which can be easily derived from Definition~\ref{defgammac}.

\begin{proposition}\label{propgammac}
If $F_\varepsilon$ $\Gamma$-converge to $F$ in $U$, also the corresponding minimal values (or infima) 
converge. Moreover, if $u_\varepsilon$ is a minimiser of $F_\varepsilon$ and $u_\varepsilon\to u\in U$, 
then $u$ is a minimiser of $F$. 
\end{proposition}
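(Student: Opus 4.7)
The plan is to prove both the minimality of the limit $u$ and the convergence of infima at once, exploiting the two inequalities in the definition of $\Gamma$-convergence together with the minimality of each $u_\varepsilon$.

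First I would pick an arbitrary competitor $v\in U$ and appeal to the $\Gamma$-limsup inequality to produce a recovery sequence $v_\varepsilon\to v$ with $\lim_{\varepsilon\to 0}F_\varepsilon(v_\varepsilon)=F(v)$. Since $u_\varepsilon$ minimises $F_\varepsilon$ on $U$, we have $F_\varepsilon(u_\varepsilon)\le F_\varepsilon(v_\varepsilon)$; taking $\limsup$ gives
\[
\limsup_{\varepsilon\to 0}F_\varepsilon(u_\varepsilon)\le \lim_{\varepsilon\to 0}F_\varepsilon(v_\varepsilon)=F(v).
\]
On the other side, the $\Gamma$-liminf inequality applied to the hypothesised convergence $u_\varepsilon\to u$ yields $F(u)\le \liminf_{\varepsilon\to 0}F_\varepsilon(u_\varepsilon)$. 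Chaining the two estimates gives
\[
F(u)\le \liminf_{\varepsilon\to 0}F_\varepsilon(u_\varepsilon)\le \limsup_{\varepsilon\to 0}F_\varepsilon(u_\varepsilon)\le F(v)
\]
for every $v\in U$, so $u$ is a minimiser of $F$.

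To upgrade this to convergence of the infima, I would specialise $v=u$ in the chain above, which collapses all four quantities: it shows that $F_\varepsilon(u_\varepsilon)\to F(u)$. Since $u_\varepsilon$ is a minimiser we have $F_\varepsilon(u_\varepsilon)=\inf_U F_\varepsilon$, while $u$ being a minimiser gives $F(u)=\inf_U F$. Hence $\inf_U F_\varepsilon\to\inf_U F$, as claimed.

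There is really no serious obstacle: the whole argument is an exercise in juxtaposing the two halves of the definition, and the minimality of $u_\varepsilon$ is precisely what allows one to compare $F_\varepsilon(u_\varepsilon)$ with an arbitrary recovery sequence. The only subtle point one should be aware of, but which does not enter the present statement, is that the convergence $u_\varepsilon\to u$ is hypothesised rather than derived; in applications one usually needs an equi-coercivity property of the family $\{F_\varepsilon\}$ to extract such a convergent (sub)sequence of minimisers in the first place, but this goes beyond the scope of the proposition itself.
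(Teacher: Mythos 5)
Your argument is correct and is exactly the standard derivation that the paper alludes to when it says the proposition "can be easily derived from Definition~\ref{defgammac}" (no proof is actually given there): the recovery sequence for an arbitrary competitor $v$ bounds $\limsup_\varepsilon F_\varepsilon(u_\varepsilon)$ from above by $F(v)$, the liminf inequality bounds it from below by $F(u)$, and taking $v=u$ collapses the chain. Your closing remark is also apt: the convergence of the infima is obtained here under the hypothesis that minimisers exist and converge, and in general one needs equi-coercivity to produce such a convergent (sub)sequence.
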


Hence, the asymptotic behaviour of minimisers of $F_\varepsilon$ (and therefore solutions of the Euler-Lagrange equations, see Section 6) can be partly understood by considering 
the $\Gamma$-limit of $F_\varepsilon$. 

Moreover, $\Gamma$-convergence has nice compactness properties, i.e. in general it is easy to show that a $\Gamma$-limit along subsequences exists. The problem is then to identify this limit (see Section 5) and to show properties of this limit, in particular that it is again an integral functional.

\subsection{$\Gamma$-convergence limit.}

We say that a family
$(A_\rho)_{\rho>0}$ of open subsets of~$\R^N$ with Lipschitz boundary  is a {\it substantial family} (around~$x$) as~$\rho\to0^+$ if, for every positive~$\rho$, there hold
\[
A_\rho \subset B(x,\rho) := \{y\in\R^N\,\big|\, |y-x|<\rho\}\quad 
\textrm{and}
\quad
|A_\rho|\geq c\;|B(x,\rho)|,
\]
where~$c$ is a constant independent of~$\rho$ (see the monograph~\cite[Ch.8]{rud} for other properties). 

The following result states that the integral function~$f$ can be obtained from the minima of the Dirichlet problem for 
$F$ with affine boundary data. To this purpose, for any domain
$A$ with Lipschitz boundary and for every $u_0$ $H$-affine data, we introduce the following regularised variational problem
\begin{equation}\label{min_reg}
m_{reg}(F,u_0,A):=\inf
\left\{F(u,A)\,|\, u\in C^\infty(\R^N),\, u=u_0\textrm{ on }\partial A
\right\}.
\end{equation}
Since the functional depends on $u$ only through its horizontal gradient, therefore the constant $a$ in the definition of $H$-affine function does not affect the results; we now consider directly the
$H$-linear functions defined in \eqref{H-linear} as boundary data.
We state now a useful property, key for the later results.

\begin{lemma}
\label{Lemma1}
Let $A$ be a $N$-dimensional domain with Lipschitz boundary.
For $q\in\R^{m}$ and
$l_q(y)=q\cdot \pi_{m}(y)$,  and for every smooth function $u$ such that $u=l_q$ on $\partial A$,
we have
\begin{equation}\label{cubi1}
\int_{A}\nabla_\cX u\, dy = q\, |A|.
\end{equation}
\end{lemma}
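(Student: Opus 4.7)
The plan is to split $u$ into a boundary-matching $H$-linear part and a piece vanishing on $\partial A$. Set $v := u - l_q$, so $v$ is smooth with $v=0$ on $\partial A$, and by linearity of $\nabla_\cX$,
\[
\int_A \nabla_\cX u\, dy \;=\; \int_A \nabla_\cX l_q\, dy \;+\; \int_A \nabla_\cX v\, dy.
\]
By Lemma~\ref{affineLemma} the first integrand is the constant vector $q$, so the first summand equals $q\,|A|$. It therefore remains to show that the second summand vanishes.

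The key ingredient I would use is that each horizontal vector field $X_i$ from \eqref{vectorFields_N}, viewed as a smooth vector field on $\R^N$, is divergence free in the Euclidean sense. In $\dH^n$ this is a one-line check from the explicit expressions
\[
X_j = \partial_{x_j} - \tfrac{x_{n+j}}{2}\partial_{x_{2n+1}},\qquad X_{n+j} = \partial_{x_{n+j}} + \tfrac{x_j}{2}\partial_{x_{2n+1}},\qquad j=1,\dots,n,
\]
since the coefficient in front of $\partial_{x_j}$ (respectively $\partial_{x_{n+j}}$) is constant and the coefficient in front of $\partial_{x_{2n+1}}$ does not depend on $x_{2n+1}$. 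The same structural feature persists in arbitrary Carnot groups in exponential coordinates, which is why the argument will extend to the more general setting mentioned in the remark following \eqref{DeterminantMatrixExt}.

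Having $\mathrm{div}\, X_i = 0$, for every smooth $w$ one has the pointwise identity $X_i w = \mathrm{div}(w X_i)$. Applying the classical divergence theorem on the Lipschitz domain $A$ to the smooth field $v X_i$ then yields, for each $i=1,\dots,m$,
\[
\int_A X_i v\, dy \;=\; \int_{\partial A} v\,(X_i\cdot \nu)\, d\sigma \;=\; 0,
\]
because $v\equiv 0$ on $\partial A$; here $\nu$ denotes the outer Euclidean unit normal. Collecting components gives $\int_A \nabla_\cX v\, dy = 0$, and combining with the earlier decomposition proves \eqref{cubi1}.

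The only real obstacle is the verification of the Euclidean divergence-free property of the $X_i$; once that is in hand, the lemma reduces to the linear splitting above together with a standard boundary-integral argument. Notice also that no integrability beyond smoothness of $u$ is needed, because the divergence theorem is applied componentwise and the boundary values of $v$ are zero by construction.
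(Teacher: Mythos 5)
Your proof is correct and rests on exactly the same ingredients as the paper's: the Euclidean divergence-freeness of the $X_i$, the divergence theorem on the Lipschitz domain, and the identity $\nabla_\cX l_q = q$. The only difference is bookkeeping — you subtract $l_q$ first and integrate by parts once on $v=u-l_q$, whereas the paper pushes $\int_A X_i u$ to the boundary, substitutes the boundary datum, and integrates by parts back — so the two arguments are essentially identical.
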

\begin{proof}
We write $y=(\underline{y}_1,\underline{y}_2,y_3)\in\R^N\equiv \R^n\times\R^n\times \R$.
First
we show that
\begin{equation}\label{div}
\int_{A}\nabla_\cX u\, dy =  \int_{\partial A}(q\cdot\pi_{m})\;\nu_0\,d\,\cH^{N-1}\in \R^m,
\end{equation}
where $\nu_0$ is the horizontal normal, i.e. $\nu_0=\sigma^T \nu$ with $\sigma$ matrix of the vector fields  defined in \eqref{MatrixHeisenberg} and $\nu$ the outward unit normal to $\partial A$, while $d\,\cH^{N-1}$ is the Hausdorff measure defined on $\partial A$. 
To prove the claim \eqref{div}
we use that the vector fields  in the Heisenberg group are divergence free
  and we combine a simply integration by parts  with Remark \ref{derivateSigma}, which gives
  $$\int_{A}X_i u\, dy=
\int_{\partial A}u \;
\nu^i_0\;
d\,\cH^{N-1}
= \int_{\partial A}(q\cdot \pi_{m})\;\nu^i_0\;d\,\cH^{N-1},
$$
where $\nu^i_0$ is the $i$-component of $\nu_0$  for $i=1,\dots, m=2n$.

Then we can use the divergence theorem again (together with the fact that the vector fields in Carnot groups are divergence free) to conclude:
$$
 \int_{\partial A}(q\cdot \pi_{m})\;\nu^i_0\;d\,\cH^{N-1}=
 \int_A  X_i \left(q\cdot \pi_{m}\right)\, d\, x= 
q_i  \int_A d\, x=
 q_i |A|,
$$
for all $i=1,\dots, m$.

\end{proof}
 We now use the previous lemma to show that, whenever the integrand function $f$ does not depend on $x$, then  $H$-affine functions are minimisers for 
 problem \eqref{min} with $H$-affine boundary condition.
 \begin{lemma}
 \label{Lemma2}
 Given a domain with Lipschitz boundary $A\subset \R^N$, consider the problem \eqref{min} with $u_0(x)=q\cdot \pi_m(x)+a $ for some $q\in \R^m$ and $a\in \R$,
and  $F$ defined in \eqref{F} with $f(x,q)=f(q)$ convex, then
 $$
 m_{reg}(F,u_0,A)=\int_Af(\nabla_{\cX}u_0)\, d\,x=f(q)|A|.
 $$
 \end{lemma}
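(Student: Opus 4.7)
My plan is to prove the equality by matching upper and lower bounds, using Lemma \ref{Lemma1} as the crucial ingredient together with Jensen's inequality.

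For the upper bound, I would simply test the infimum defining $m_{reg}(F,u_0,A)$ against the $H$-affine function $u_0$ itself. Since $u_0$ is smooth and trivially coincides with itself on $\partial A$, it is an admissible competitor. By Lemma \ref{affineLemma}, $\nabla_\cX u_0 \equiv q$ on all of $\R^N$, so
\[
m_{reg}(F,u_0,A) \;\le\; F(u_0,A) \;=\; \int_A f(q)\, dx \;=\; f(q)\,|A|.
\]

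For the lower bound, let $u\in C^\infty(\R^N)$ be any competitor with $u=l_q$ on $\partial A$ (we may drop the constant $a$ since $F$ depends only on $\nabla_\cX u$). By Lemma \ref{Lemma1}, the horizontal gradient of $u$ has average value $q$ on $A$:
\[
\frac{1}{|A|}\int_A \nabla_\cX u\, dx \;=\; q.
\]
Since $f$ is convex, Jensen's inequality applied to the probability measure $\frac{dx}{|A|}$ on $A$ yields
\[
f(q) \;=\; f\!\left(\frac{1}{|A|}\int_A \nabla_\cX u\, dx\right) \;\le\; \frac{1}{|A|}\int_A f(\nabla_\cX u)\, dx \;=\; \frac{F(u,A)}{|A|}.
\]
Thus $f(q)|A| \le F(u,A)$ for every admissible $u$, which gives $f(q)|A| \le m_{reg}(F,u_0,A)$ after taking the infimum.

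Combining the two bounds gives the stated equality. There is no serious obstacle here: the convexity of $f$ and the independence of $x$ reduce the statement to Jensen's inequality, and the only nontrivial input is the mean-value identity from Lemma \ref{Lemma1}, which has already been proved via the divergence-free property of the vector fields $X_i$. I would note in passing that this is precisely the analog of the classical Euclidean fact that $H$-affine functions minimise convex homogeneous integral functionals against their own boundary data.
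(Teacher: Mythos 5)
Your proof is correct and follows exactly the route the paper intends: the paper's own (one-line) proof says the result "follows by combining Lemmas \ref{affineLemma} and \ref{Lemma1} with Jensen's inequality", which is precisely your upper bound via the admissible competitor $u_0$ and your lower bound via the mean-value identity plus Jensen. You have simply written out the details that the paper leaves implicit.
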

\begin{proof}
The result follows by combining Lemmas \ref{affineLemma} and  \ref{Lemma1} with Jensen's inequality.
\end{proof}

Now following the arguments of Dal Maso-Modica \cite{DalMasoModica2} for the standard non degenerate case, we prove that the integrand function $f$ can be retrieved in term of $m_{reg}$.

\begin{theorem}\label{T1rendic}
Under assumptions~\eqref{assumption1}-\eqref{assumption2},
there exists a measurable subset~$\cal N$ of~$\R^N$ with $|{\cal N}|=0$ such that
\[
f(x,q)=\lim_{\rho\to0^+}\frac{m_{reg}(F,l_q,A_{\rho})}{|A_\rho|},
\]
for every~$q\in \R^{m}$ and with $l_q$ $H$-linear as in \eqref{H-linear}, $x\in \R^N\setminus {\cal N}$ and every substantial family~$(A_\rho)_{\rho>0}$ around $x$.
\end{theorem}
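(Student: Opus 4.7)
The plan is to adapt the classical scheme of Dal Maso--Modica \cite{DalMasoModica2} to this sub-Riemannian setting, with the Euclidean geometric fact that affine functions have constant gradient replaced by Lemma~\ref{Lemma1}, which plays exactly that role for $H$-linear boundary data and the horizontal gradient. First I would fix a countable dense set $D\subset\R^{m}$. For each $q\in D$, $y\mapsto f(y,q)$ belongs to $L^{1}_{\mathrm{loc}}(\R^{N})$ by the growth condition \eqref{assumption2}, so Lebesgue's differentiation theorem produces a negligible set ${\cal N}_q$ outside which
\[
\lim_{\rho\to 0^{+}}\frac{1}{|A_\rho|}\int_{A_\rho}|f(y,q)-f(x,q)|\,dy=0
\]
for every substantial family around $x$ (here one uses the doubling property of the family together with the containment $A_\rho\subset B(x,\rho)$). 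Set ${\cal N}:=\bigcup_{q\in D}{\cal N}_q$, which is still negligible.

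For the upper bound at $x\notin{\cal N}$ and $q\in D$, the smooth $H$-linear function $l_q$ satisfies $\nabla_\cX l_q = q$ by Lemma~\ref{affineLemma}, hence is admissible in \eqref{min_reg}, giving $m_{reg}(F,l_q,A_\rho)\leq \int_{A_\rho} f(y,q)\,dy$; dividing by $|A_\rho|$ and invoking the approximate continuity above yields $\limsup \leq f(x,q)$. The lower bound is the more delicate half. For any smooth competitor $u$ with $u=l_q$ on $\partial A_\rho$, freezing the first argument of $f$ at $x$, the convexity of $f(x,\cdot)$ combined with Jensen's inequality and Lemma~\ref{Lemma1} gives
\[
\frac{1}{|A_\rho|}\int_{A_\rho}f(x,\nabla_\cX u(y))\,dy \;\geq\; f\!\left(x,\frac{1}{|A_\rho|}\int_{A_\rho}\nabla_\cX u\,dy\right) \;=\; f(x,q).
\]
It then remains to control the error
\[
R_\rho(u):=\int_{A_\rho}\bigl[f(y,\nabla_\cX u)-f(x,\nabla_\cX u)\bigr]\,dy
\]
uniformly along a near-minimising sequence, and to extend the resulting identity from $q\in D$ to all $q\in\R^{m}$. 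The latter extension is straightforward: convexity plus \eqref{assumption2} make $f(x,\cdot)$ locally Lipschitz, and $q\mapsto m_{reg}(F,l_q,A_\rho)/|A_\rho|$ inherits an analogous local Lipschitz estimate by perturbing a competitor by $l_{q-q'}$, so density of $D$ closes the argument.

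The main obstacle is showing $R_\rho(u)=o(|A_\rho|)$ along near-minimisers: the growth condition \eqref{assumption2} only gives an $L^{\alpha}$, not $L^{\infty}$, bound on $\nabla_\cX u$ via $\int_{A_\rho}|\nabla_\cX u|^{\alpha}\,dy\leq C|A_\rho|$, so one cannot merely invoke a uniform modulus of continuity. My strategy to overcome this is to combine the local Lipschitz continuity of $f(y,\cdot)$ on bounded sets (a consequence of convexity and the growth bound, with Lipschitz constant controlled by $1+|q|^{\alpha-1}$) with a truncation/equi-integrability argument: on the set where $|\nabla_\cX u|\leq M$ the approximate continuity of $f(\cdot,q)$ for $q$ in a finite $\e$-net of $\{|q|\leq M\}\subset D$ handles the error by a standard three-term approximation, while on the high-gradient set $\{|\nabla_\cX u|>M\}$ the polynomial growth and Chebyshev's inequality bound the contribution by $C M^{-1}|A_\rho|$; taking first $\rho\to 0^{+}$ and then $M\to\infty$ delivers the required smallness, completing the identification $f(x,q)=\lim_\rho m_{reg}(F,l_q,A_\rho)/|A_\rho|$.
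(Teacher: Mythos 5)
Your overall architecture (Lebesgue points for a countable dense set of slopes $D$, the trivial upper bound from admissibility of $l_q$, Jensen plus Lemma~\ref{Lemma1} for the frozen-coefficient lower bound, and a Lipschitz-in-$q$ argument to pass from $D$ to all of $\R^{m}$) matches the paper's, which follows \cite[Theorem I]{DalMasoModica2}. The gap is in your treatment of the error term $R_\rho(u)$ on the high-gradient set. From \eqref{assumption2} and near-minimality you only get $\int_{A_\rho}|\nabla_\cX u|^{\alpha}\,dy\le C|A_\rho|$; Chebyshev then controls the \emph{measure} $|\{|\nabla_\cX u|>M\}|\le CM^{-\alpha}|A_\rho|$, but not the integral $\int_{\{|\nabla_\cX u|>M\}}|\nabla_\cX u|^{\alpha}\,dy$, which is exactly what dominates $|f(y,\nabla_\cX u)-f(x,\nabla_\cX u)|$ there (take $f(x,p)=a(x)|p|^{\alpha}$: the difference is $|a(y)-a(x)|\,|\nabla_\cX u|^{\alpha}$, and note that $\sup_{p}|f(x,p)-f(y,p)|=+\infty$ in this example, so the supremum over all of $\R^m$ cannot be used either). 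Without equi-integrability of $|\nabla_\cX u_\rho|^{\alpha}$ uniformly in $\rho$ --- which an $L^{\alpha}$ bound alone does not provide, since the mass may concentrate precisely where the gradient is large --- your claimed bound $CM^{-1}|A_\rho|$ fails and the lower-bound step collapses. Filling this by a Meyers/Gehring higher-integrability estimate for near-minimisers would be a substantial extra ingredient (to be established in the subelliptic setting), and you do not invoke it.

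The paper circumvents this with the two-step structure of Dal Maso--Modica. Step 1 proves the theorem under the additional hypothesis that $f(\cdot,q)$ is independent of $x$ for $|q|>R$: then $\omega(x,y,q)=|f(x,q)-f(y,q)|$ vanishes for $|q|\ge R$, so $\phi(x,y)=\sup_{q}\omega(x,y,q)$ is effectively a supremum over a compact set, controlled by finitely many Lebesgue-point slopes via the uniform Lipschitz bound $|\omega(x,y,q_1)-\omega(x,y,q_2)|\le K|q_1-q_2|$; no gradient truncation is ever needed. Step 2 removes the hypothesis: the inequality $f(x,q)\ge\limsup_{\rho}m_{reg}/|A_\rho|$ is obtained separately from convexity and coercivity, while the reverse inequality follows by writing $f=\sup_i f_i$ with each $f_i$ as in Step 1 and using monotonicity of $m_{reg}$ in the integrand. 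You should replace your truncation argument by this approximation scheme, or else supply a genuine equi-integrability estimate for near-minimisers.
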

\begin{proof}
We will use the same arguments as in~\cite[Theorem I]{DalMasoModica2} for the non-degenerate case.
We just sketch the main steps.

\noindent \texttt{Step 1.} Let us at the moment  assume  that there exists some $R>0$ such that~$f$ does not depend on~$x$ for~$|q|>R$.
Then by Jensen's inequality and by Lemma~\ref{Lemma1}, we obtain
\begin{eqnarray*}\notag
\inf \left\{\int_{A_\rho}f(x,\nabla_\cX u)\, dy\,|\, u\in C^\infty(\R^N),\, u=l_q\textrm{ on }\partial A_\rho \right\}&=&\int_{A_\rho}f(x,q)\, dy \\
 &=&  |A_\rho|f(x,q),
\end{eqnarray*}
for every $q\in\R^{m}$, $\rho>0$, $x\in\R^N$ and $l_q$ defined by \eqref{H-linear}.
Exactly as in~\cite[Proposition 1.1]{DalMasoModica2}, we can deduce
\begin{equation*}
\left|f(x,q)-\frac{m_{reg}(F, l_q, A_{\rho})}{|A_\rho|}\right|\leq
\frac{1}{|A_\rho|}\int_{A_\rho}\sup_{q\in\R^{m}}|f(x,q)-f(y,q)|\, dy.
\end{equation*}
It remains to prove that there exists a measurable subset~$\cal N$ of~$\R^N$ with $|{\cal N}|=0$ such that
\[
\lim_{\rho\to0^+}\frac{1}{|A_\rho|}\int_{A_\rho}\phi(x,y)\, dy=0,
\]
for every~$x\in\R^N\setminus{\cal N}$ and every substantial family~$\{A_\rho\}$ around $x$,
where
\[
\phi(x,y):= \sup_{q\in\R^{m}}\omega(x,y,q),\quad \omega(x,y,q):=|f(x,q)-f(y,q)|,\quad\forall x,y\in\R^N,q\in\R^{m}.
\]
We observe that $\omega(x,y,q)=0$ for every~$q\in\R^{m}$ with~$|q|\geq R$ and that, arguing as in~\cite{DalMasoModica2} (recall that $f$ is convex w.r.t.~$q$), there exists a positive constant~$K$ such that: $|\omega(x,y,q_1)-\omega(x,y,q_2)|\leq K|q_1-q_2|$ for every $x,y\in\R^{N}$ and $q_1,q_2\in\R^{m}$.

Fix a dense subset~$D$ of~$\R^{m}$. The Lebesgue's Differentiation Theorem ensures that there exists a measurable subset~$\cal N$ of~$\R^N$ with $|{\cal N}|=0$ such that
\[
\lim_{\rho\to0^+}\frac{1}{|A_\rho|}\int_{A_\rho}\omega(x,y,p)\, dy=0,\qquad\forall x\in\R^N\setminus{\cal N},\ p\in D,
\]
for every substantial family~$\{A_\rho\}$  around~$x$. Moreover, as in~\cite{DalMasoModica2}, for every~$\varepsilon>0$, there exists a finite set $\{p_1,\dots,p_k\}\subset D$ such that
\[
\phi(x,y)\leq \sum_{i=1}^{k}\omega(x,y,p_i)+K\varepsilon, \qquad\forall x,y\in\R^N.
\]
Therefore, we infer
\[
\limsup_{\rho\to0^+} \frac{1}{|A_\rho|}\int_{A_\rho}\phi(x,y)\, dy\leq K\varepsilon,
\qquad \forall x\in\R^N\setminus{\cal N},
\]
for every substantial family~$\{A_\rho\}$  around~$x$.
By the arbitrariness of~$\varepsilon$, we accomplish the proof.

\noindent \texttt{Step 2.}
Let us now remove the additional assumption of Step~$1$. Taking into account the convexity and the coercivity of~$f$ w.r.t.~$q$, by the same arguments as in~\cite[Theorem I]{DalMasoModica2}, we obtain that there exists a measurable set~${\cal N}'\subset \R^N$, with~$|{\cal N}'|=0$, such that
\[
f(x,q)\geq \limsup_{\rho\to 0^+} \frac{m_{reg}(F,l_q,A_{\rho})}{|A_\rho|},\qquad \forall x\in\R^N\setminus {\cal N}',\, q\in \R^{m},
\]
for every family~$(A_\rho)_{\rho>0}$ as in the statement.
In order to obtain the reverse inequality, we first observe that the same arguments of~\cite[Lemma 1.2]{DalMasoModica2} ensure that there exists an increasing sequence~$\{f_i\}_{i\in\N}$ of functions such that $f=\sup_if_i$ and each~$f_i$ satisfies the assumptions of step~$1$. For each~$i\in\N$, we denote~$F_i$ and~${\cal N}_i$ respectively the corresponding functional and the negligible set given by step~$1$. We set ${\cal N}'':=\cup_{i=1}^{\infty}{\cal N}_i$.
Step~$1$ for~$f_i$ and the inequality~$f\geq f_i$ entail
\[
f_i(x,q)=\lim_{\rho\to0^+}\frac{m_{reg}(F_i,l_q,A_{\rho})}{|A_\rho|}\leq
\liminf_{\rho\to0^+}\frac{m_{reg}(F,l_q,A_{\rho})}{|A_\rho|},\quad \forall x\in\R^N\setminus{\cal N}'', q\in \R^{m},
\]
for every family~$(A_\rho)_{\rho>0}$ as in the statement. Passing to the limit as~$i\to+\infty$, one deduces
\[
f(x,q)\leq
\liminf_{\rho\to0^+}\frac{m_{reg}(F,l_q,A_{\rho})}{|A_\rho|},\qquad \forall x\in\R^N\setminus{\cal N}'',\, q\in \R^{m},
\]
for every family~$(A_\rho)_{\rho>0}$ as in the statement. Finally, we accomplish the proof by choosing~${\cal N}={\cal N}'\cup {\cal N}''$.
\end{proof}

We denote by~${\cal F}={\cal F}(\alpha,C_1,C_2)$ the set of all functional~$F$ which satisfy assumptions~\eqref{assumption1}-\eqref{assumption2} with the same constants~$\alpha$,~$C_1$ and~$C_2$. In the next result, we obtain a characterization of $\Gamma$-convergence in terms of the convergence of the minima of problems with Dirichlet boundary conditions.
We like also to mention that very recently some results in this direction have been proved in \cite{Maione} in much more general geometries but with quite different techniques.

\begin{theorem}\label{T4rendic}
Let $\{F_n\}_{n\in\N}$ be a sequence of functionals in~${\cal F}$. Let~$D$ be a dense subset of~$\R^{m}$. Let~${\cal B}$ be a family of open bounded subsets of~$\R^N$ which contains a substantial family around every point~$x\in\R^N$. Assume that for each~$q\in D$ and for each~$B\in{\cal B}$ there exists $\lim_n m(F_n,q,B)$.
Then, there exists a functional~$F_\infty\in {\cal F}$ such that the sequence~$\{F_n\}_{n\in\N}$ $\Gamma$-converge to~$F_\infty$ and
\[
\lim_{n\to+\infty}m(F_n,l_q,A)=m(F_\infty,l_q,A),
\]
for every~$q\in \R^{m}$ and for every $A$  bounded domain of~$\R^N$ with Lipschitz boundary.
\end{theorem}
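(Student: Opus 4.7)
I would follow the strategy of the Dal Maso--Modica characterisation \cite[Theorem II]{DalMasoModica2}, adapted to the subelliptic Heisenberg setting via Theorem \ref{T1rendic}. The argument splits into three steps: compactness producing $\Gamma$-convergent subsequences, identification of every such $\Gamma$-limit as an element of $\mathcal{F}$ together with convergence of constrained minima, and a uniqueness argument that promotes subsequential convergence to convergence of the whole sequence.

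\textbf{Compactness and identification.} The two-sided growth bound \eqref{assumption2} combined with the horizontal Poincar\'e inequality (Lemma~\ref{poincare}) makes $\mathcal{F}$ equi-coercive on $W^{1,\alpha}_{\cX,0}(A)$, and Lemma~\ref{Lp-emb} gives compact embedding into $L^{\alpha}$. The standard compactness theorem for $\Gamma$-convergence (see \cite{DM,BraDef}) then produces, from any subsequence of $\{F_n\}$, a further subsequence $\{F_{n_k}\}$ $\Gamma$-converging to some functional $F'$. By convexity in $q$, the growth bounds, and a lower semicontinuity/integral representation argument, $F'$ is itself of integral form with integrand $f' \in \mathcal{F}$. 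The Poincar\'e inequality also supplies the ``fundamental estimate'' needed to splice interior and boundary test sequences, which makes $\Gamma$-convergence compatible with $H$-affine Dirichlet data, yielding
$$
\lim_{k\to\infty} m(F_{n_k}, l_q, A) = m(F', l_q, A)
$$
for every $q \in \R^m$ and every bounded domain $A$ with Lipschitz boundary.

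\textbf{Uniqueness via Theorem \ref{T1rendic}.} If two subsequences of $\{F_n\}$ produce $\Gamma$-limits $F'$ and $F''$ with integrands $f'$ and $f''$, the previous step combined with the hypothesis that $\lim_n m(F_n, l_q, B)$ exists for $q \in D$ and $B \in \mathcal{B}$ gives $m(F', l_q, B) = m(F'', l_q, B)$ for all such $q, B$. Since $\mathcal{B}$ contains a substantial family around every point of $\R^N$, Theorem~\ref{T1rendic} applied to both $F'$ and $F''$ yields $f'(x,q) = f''(x,q)$ for a.e.\ $x$ and every $q \in D$. The growth bound \eqref{assumption2} together with convexity in $q$ makes both integrands locally Lipschitz in $q$, so by density of $D$ in $\R^m$ they agree on a set of full measure. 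Hence $F' = F''$, the whole sequence $\{F_n\}$ $\Gamma$-converges to a single limit $F_\infty \in \mathcal{F}$, and convergence of minima passes from subsequences to the full sequence.

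\textbf{Main obstacle.} The most delicate point is the integral representation of the $\Gamma$-limit together with the compatibility between $\Gamma$-convergence and constrained Dirichlet minimisation in the degenerate horizontal geometry. In the Euclidean case these steps rely on cutoff arguments controlling the full gradient; here one must instead control the horizontal gradient of cutoffs and recover lost regularity in the vertical direction through the commutator structure, using Lemmas~\ref{Lp-emb} and~\ref{poincare} to compensate for the loss of ellipticity and the reduced dimension of $\nabla_{\cX}$.
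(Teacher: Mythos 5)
Your proposal is correct and follows essentially the same route as the paper: compactness of the class $\mathcal{F}$ under $\Gamma$-convergence (which the paper phrases as compact metrizability, following \cite[Proposition 1.21]{DMMampa} and using Lemmas~\ref{Lp-emb} and~\ref{poincare}), followed by identification of all subsequential $\Gamma$-limits through Theorem~\ref{T1rendic} and the convexity-based extension from the dense set $D$ to all of $\R^m$. The only cosmetic differences are that the paper works with $m_{reg}$ rather than $m$ in the uniqueness step and cites \cite[Theorem IV]{DalMasoModica2} rather than Theorem~II.
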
 
\begin{proof}
The proof follows exactly the same arguments of the proof of~\cite[Theorem IV]{DalMasoModica2} so we just sketch the main issues.
We first claim that the space~${\cal F}$ can be endowed with a metric~$d$ such that~$({\cal F},d)$ is a compact metric space and a sequence~$\{F_n\}_n$ of functionals in~${\cal F}$ is convergent w.r.t. to~$d$ to some~$F\in{\cal F}$ if and only if it $\Gamma$-converges to~$F$.
Indeed, this property can be obtained following the same arguments of \cite[Proposition 1.21]{DMMampa} and taking advantage of the properties of $W_{\cX,0}^{k,\alpha}$ and of $W_{\cX}^{k,\alpha}$ for the Heisenberg group, in particular the Rellich compact injection  and the Poincar\'e inequality respectively in Lemma \ref{Lp-emb} and in Lemma  \ref{poincare}. Hence, we shall omit it.

Even if the rest of the proof follows the arguments in \cite{DalMasoModica2}, for the sake of completeness, let us recall the role of Theorem~\ref{T1rendic}. Let~$F_{k_1(n)}$ and~$F_{k_2(n)}$ two subsequences of~$F_n$ which $\Gamma$-converge respectively to some~$F_\infty'$ and to some~$F_\infty''$. We claim: $F_\infty'=F_\infty''$.
Actually, we have
\[
m_{reg}(F_\infty',l_q,B)=m_{reg}(F_\infty'',l_q,B), \qquad \forall q\in\R^{m},\, B\in{\cal B}.
\]
Theorem \ref{T1rendic} ensures that there exists a measurable set~${\cal N}\subset \R^N$, with $|{\cal N}|=0$, such that
\[
f_\infty'(x,q)=f_\infty''(x,q),\qquad \forall x\in\R^N\setminus {\cal N},\, q\in D,
\]
where $f_\infty'$ and $f_\infty''$ are the integrands of~$F_\infty'$ and respectively of~$F_\infty''$. Finally, the convexity of $f_\infty'$ and of $f_\infty''$ permits to extend the previous equality to every $q\in\R^{m}$.
\end{proof}

\section{Periodic homogenisation for degenerate functionals with $H$-affine data.}

Given the functional $F(u,A)$ defined in \eqref{F}, we now introduce for all $\varepsilon>0$ the following rescaled  functionals:
\begin{equation}
\label{RescaledFunctional}
F_\varepsilon(u,A)=\big(\rho^H_{\varepsilon}F\big)(u,A)
:=
\left\{
\begin{aligned}
\int_A
f\left(\deps(x), \nabla_{\cX}u(x)
\right)\;d\, x,&
\quad u\in W^{1,k}_{\cX}(A) \\
+\infty,&
\quad \textrm{else},
\end{aligned}
\right.
\end{equation}
and for all
$ z\in \R^N$, the following translated  functionals:
\begin{equation}
\label{translatedFunctional}
\big(\tau^H_zF\big)(u,A):=
\left\{
\begin{aligned}
\int_A f\left(z*
x,\nabla_{\cX} u(x)\right)\; d\, x,&
\quad
u\in  W_{\cX}^{k,\alpha} (A)\\
+\infty,& \quad\textrm{else}.
\end{aligned}
\right.
\end{equation}
Following the idea in \cite{DalMasoModica1}, for all fixed $q\in \R^m$, for all bounded domain $A\subset \R^N$, and with $N=2n+1$ and $m=2n$,
 we introduce the following notation
\begin{equation}
\label{mu_q}
\mu_q(A):= m(F,l_q,A)=\min\left\{
\int_Af\big(x,\nabla_{\cX} u(x)\big)\, d\;x \;\big|
u-l_q\in W^{1,\alpha}_{\cX,0} (A)
\right\},
\end{equation}
where we recall that $l_q(x)=q\cdot\pi_m(x)$ is a $H$-affine boundary data.

We next define  
$$
\tau_z^H\mu_q (A):=\mu_q\big(\tau_z^H(A)\big)=\mu_q\big(z* A\big).
$$
\begin{lemma}
\label{Identitytau_mu_q}
Given a bounded domain $A$ of $\R^N$, there holds

\begin{equation}
\label{LemmaTranslation1}
\tau_z^H\mu_q (A)=\min\left\{
(\tau^H_zF) (w,A)\;\big|
w-l_q\in W^{1,\alpha}_{\cX,0} (A)
\right\}.
\end{equation}
\end{lemma}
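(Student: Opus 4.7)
The plan is to show the equality by constructing an explicit measure-preserving bijection between the admissible set on the left and the admissible set on the right that matches the two integrals. The backbone is a change of variables $y = z * x$ together with left-invariance of the horizontal gradient.

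First I would check the change of variables formula for left-translations. Writing $y = z * x$ componentwise via \eqref{Group_Law}, the Jacobian matrix of $L_z$ has an upper-triangular structure (or more abstractly, it is exactly $\sigma_{Ext}$-related), and by \eqref{DeterminantMatrixExt} its determinant equals $1$; hence $dy = dx$ and $z * A$ has the same Lebesgue measure as $A$. In parallel, for any $u \in W^{1,\alpha}_{\cX}(z*A)$, the composition $w := u \circ L_z$ belongs to $W^{1,\alpha}_{\cX}(A)$ because of left-invariance \eqref{LeftInvariant_vectorFields}, which gives $\nabla_{\cX} w(x) = (\nabla_{\cX} u)(z * x)$.

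Next I would handle the boundary condition, which is the only subtle point. A direct computation with \eqref{Group_Law} shows $\pi_m(z*x) = \pi_m(z) + \pi_m(x)$, so $l_q(z*x) = l_q(x) + l_q(z)$. Therefore, if $u - l_q \in W^{1,\alpha}_{\cX,0}(z*A)$ and we set $\tilde{w}(x) := u(z*x) - l_q(z)$, then on $\partial A$ we have $\tilde{w} - l_q = u\circ L_z - l_q(z) - l_q = 0$, so $\tilde{w} - l_q \in W^{1,\alpha}_{\cX,0}(A)$. The shift by the constant $l_q(z)$ does not affect the horizontal gradient, so $\nabla_{\cX}\tilde{w}(x) = (\nabla_{\cX} u)(z*x)$. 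Conversely, given $\tilde{w}$ with $\tilde{w} - l_q \in W^{1,\alpha}_{\cX,0}(A)$, the function $u(y) := \tilde{w}(z^{-1} * y) + l_q(z)$ lies in the admissible class on $z*A$; these two assignments are inverse to each other, giving the desired bijection.

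Putting the pieces together, for any admissible $u$ on $z*A$ with corresponding $\tilde{w}$ on $A$,
\[
\int_{z*A} f\!\left(y, \nabla_{\cX} u(y)\right) dy = \int_A f\!\left(z*x, (\nabla_{\cX} u)(z*x)\right) dx = \int_A f\!\left(z*x, \nabla_{\cX}\tilde{w}(x)\right) dx = (\tau_z^H F)(\tilde{w}, A),
\]
by the change of variables, left-invariance, and the definition \eqref{translatedFunctional}. Taking the infimum on both sides yields \eqref{LemmaTranslation1}. Since minima exist for $F$ on $z*A$ by the remark following \eqref{min}, the corresponding infimum on the right is also attained. The only real care needed is the constant shift in the boundary datum; otherwise the argument is a routine translation/change-of-variable bookkeeping adapted to the non-commutative group law.
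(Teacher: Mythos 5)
Your proposal is correct and follows essentially the same route as the paper's proof: both rest on the change of variables $y=z*x$ with unit Jacobian from \eqref{DeterminantMatrixExt}, the left-invariance \eqref{LeftInvariant_vectorFields} of the horizontal gradient, and the additive splitting $l_q(z*x)=l_q(z)+l_q(x)$ used to absorb the constant shift in the boundary datum. The only difference is cosmetic (you run the bijection from $z*A$ to $A$, the paper runs it the other way), so nothing further is needed.
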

\begin{proof}
Note that since the functional $F$ depends only on the gradient of the function, $F(w,A)=F(w+l_q(z),A)$. Thus to prove \eqref{LemmaTranslation1} is the same of proving
\begin{equation}
\label{Claim1_translation_mu}
\tau_z^H\mu_q (A)=
\min\left\{
\big(\tau^H_zF)(w+l_q(z),A)
 \;\big|
w-l_q\in W^{1,\alpha}_{\cX,0} (A)
\right\}.
\end{equation}
In order to prove \eqref{Claim1_translation_mu}
we start looking at the right-hand side and defining
 $v(x):=w(x)+l_q(z)$. Since $w$ and $v$ differ only by a constant, obviously
\begin{equation}
\label{Jova1}
\begin{aligned}
&
\min
\left\{
\big(\tau^H_zF)(w+l_q(z),A)
 \;\big|
w-l_q\in W^{1,\alpha}_{\cX,0} (A)
\right\}\\
=&
\min
\left\{
\int_A
f\big(z* x,\nabla_{\cX}
 \big[w(x)+l_q(z)
\big]\big)\, d\;x 
 \;\big|
w-l_q\in W^{1,\alpha}_{\cX,0} (A)
\right\}
\\
=&
\min
\left\{
\int_A
f\big(z* x,\nabla_{\cX} v(x)
\big)\, d\;x 
 \;\big|
v-L_z(l_q)\in W^{1,\alpha}_{\cX,0} (A)
\right\},
\end{aligned}
\end{equation}
where we recall that $L_z(l_q)(x):=l_q(z* x)$ by definition of translated function.\\
Now we consider the following change of variables $y=z* x$ (equivalently $x=z^{-1}* y$ where $z^{-1}$ is the inverse element w.r.t. the group law $*$).

An easy computation shows that the Jacobian of the change of variables is exactly the matrix $\sigma_{Ext}$ defined in \eqref{MatrixHeisenberg_extended}. 
Then property \eqref{DeterminantMatrixExt} tells that $|\det J|=1$. Since $X_i$ are defined as left-invariant vector fields 
for all $i=1,\dots m$
(see \eqref{LeftInvariant_vectorFields}) we also know  that
$$
\nabla_{\cX} v (x)=\nabla_{\cX} \big(L_{z^{-1}} (v)\big)(x)=\nabla_{\cX} v(z^{-1}* x).
$$
Moreover $x\in A$ if and only if $y\in z* A$ and 
$$
v-L_z(l_q)\in W^{1,\alpha}_{\cX,0} (A) 
\quad
\textrm{ if and only if }
\quad
L_{z^{-1}}(v)-l_q\in  W^{1,\alpha}_{\cX,0} (z* A),
$$ 
where 
$L_{z^{-1}}(v)(x):=v(z^{-1}* x)$:
in fact  on $\partial (z* A)$ we have $v(z^{-1}* y)=v(x)=l_q(z* x)=l_q(z* z^{-1}* y)=l_q(y)$.
Then in the new variables $y=z* x$ we have
\begin{equation}
\label{Jova2}
\begin{aligned}
&
\min
\left\{
\int_A
f\big(z* x,\nabla_{\cX} v(x)
\big)\, d\;x 
 \;\big|
v-L_z(l_q)\in W^{1,\alpha}_{\cX,0} (A)
\right\}\\
=&
\min
\left\{
\int_{z* A}
f\big(y,\nabla_{\cX} v(z^{-1}* y)
\big)\, d\;y
 \;\big|
 L_{z^{-1}}(v)-l_q
 \in W^{1,\alpha}_{\cX,0} (z* A)
\right\}.
\end{aligned}
\end{equation}
To conclude we now define 
$
u:=L_{z^{-1}}
(v)$. 
Using again the property of left-invariant vector fields, we have 
$
\nabla_{\cX}
v (z^{-1}* y)
=\nabla_{\cX} u(z* z^{-1}* y)= \nabla_{\cX} u(y),
$
 then
\begin{equation}
\label{Jova3}
\begin{aligned}&
\min
\left\{
\int_{z* A}
f\big(y,\nabla_{\cX} v(z^{-1}* y)
\big)\, d\;y
 \;\big|
 L_{z^{-1}}(v)-l_q
 \in W^{1,\alpha}_{\cX,0} (z* A)
\right\}\\
=&
\min
\left\{
\int_{z* A}
f\big(y,\nabla_{\cX} u(y)
\big)\, d\;y
 \;\big|
u-l_q
 \in W^{1,\alpha}_{\cX,0} (z* A)
\right\}
=\tau_z^H\mu_q (A).
\end{aligned}
\end{equation}
The chains of identities in \eqref{Jova1}, \eqref{Jova2} and \eqref{Jova3} give identity \eqref{Claim1_translation_mu} and conclude the proof.
\end{proof}

The following result is an immediate consequence of the previous lemma in the case of $H$-periodic functionals.
\begin{lemma}
\label{Proposition_periodic}
Assume \eqref{assumption-Periodic}, then, for all bounded  domains $A\subset \R^N$  and 
for all $q\in \R^{m}$ and $z\in \Z^N$
$$
\tau_z^H\mu_q (A)=\mu_q (A).
$$
\end{lemma}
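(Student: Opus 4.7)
The plan is to reduce this immediately to Lemma~\ref{Identitytau_mu_q} (which already absorbed all of the non-commutative change-of-variables work) and then invoke the periodicity hypothesis \eqref{assumption-Periodic} pointwise in the integrand. No further structural estimates are needed.

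First, by Lemma~\ref{Identitytau_mu_q}, we rewrite
\[
\tau_z^H\mu_q(A)=\min\left\{\int_A f\bigl(z*x,\nabla_\cX w(x)\bigr)\,dx\;\Big|\;w-l_q\in W^{1,\alpha}_{\cX,0}(A)\right\},
\]
so the effect of the ``outer'' translation $\tau_z^H$ on the set $A$ has been traded for an explicit translation of the first slot of $f$ while keeping the same domain $A$ and the same admissible class for $w$. The key point is that this admissible class is exactly the one defining $\mu_q(A)$ in \eqref{mu_q}.

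Next, I would apply the $H$-periodicity assumption \eqref{assumption-Periodic}: for $z$ in the periodicity lattice (i.e. $z$ of the form $2k$ with $k\in\Z^N$, which is the natural integer lattice appearing in Definition~\ref{DefPeriodicFunction} and in the tiling constructed in Section~\ref{Preliminaries}), one has $f(z*x,q)=f(x,q)$ for every $x\in\R^N$ and every $q\in\R^m$. Substituting this pointwise identity into the minimisation above gives
\[
\tau_z^H\mu_q(A)=\min\left\{\int_A f\bigl(x,\nabla_\cX w(x)\bigr)\,dx\;\Big|\;w-l_q\in W^{1,\alpha}_{\cX,0}(A)\right\}=\mu_q(A),
\]
which is the claim.

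There is essentially no difficulty to surmount: all the non-trivial content, namely the fact that the Jacobian of $x\mapsto z*x$ has unit determinant (by \eqref{DeterminantMatrixExt}) and that the horizontal gradient commutes with left-translation (by \eqref{LeftInvariant_vectorFields}), was already packaged into Lemma~\ref{Identitytau_mu_q}. The only mild subtlety is the normalisation of the periodicity lattice: because the composition of two left-translations by integer vectors does not in general land in $\Z^N$, the correct integer lattice of $H$-periods is $2\Z^N$ (equivalently, one applies $\tau_k$ from \eqref{TranslazioneImportante} with $k\in\Z^N$), and this is exactly the range in which \eqref{assumption-Periodic} is formulated.
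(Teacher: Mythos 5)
Your proof is correct and follows exactly the paper's route: the paper presents this lemma as an immediate consequence of Lemma~\ref{Identitytau_mu_q} combined with the pointwise periodicity \eqref{assumption-Periodic}, which is precisely your argument. Your observation that the genuine periodicity lattice is $2\Z^N$ (so that $f(z*x,q)=f(x,q)$ holds for $z=2k$, $k\in\Z^N$) is a fair remark on the paper's normalisation conventions and does not change the substance of the proof.
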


In the following lemma we show how the assumptions on the integrand $f(x,q) $ are inherited by $\mu_q(A)$.
\begin{lemma}
\label{Boundness_mu_q}
Let $A$ be a bounded domain in $\R^N$ with Lipschitz boundary, $q\in \R^{m}$ and $\mu_q(A)$ defined in \eqref{mu_q} and let  $f:\R^N\times \R^{m}\to \R$ be measurable. 
\begin{enumerate}
\item[(i)] If $f$ satisfies assumption \eqref{assumption2}, we have
$$
C_1|q|^\alpha|A|\leq \mu_q(A)\leq C_2\big(|q|^\alpha+1\big) |A|,
$$
where $C_1,C_2$ and $\alpha$ are the same constants given in  \eqref{assumption2}.
\item[(ii)] If $f$ satisfies  assumption \eqref{assumption1}, we have
 for all $q_1,q_2\in \R^{m}$ and for all $\lambda\in (0,1)$
$$
\mu_{\lambda q_1+(1-\lambda)q_2}(A)\leq \lambda\mu_{q_1}+(1-\lambda)\mu_{q_2}.
$$
\end{enumerate}
\end{lemma}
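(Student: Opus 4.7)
My plan is to treat the two parts separately, both via natural test-function arguments, using the convexity of $f$ in $q$ from \eqref{assumption1}, the growth bounds in \eqref{assumption2}, and Lemma \ref{affineLemma} together with Lemma \ref{Lemma1}.

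For the upper bound in (i), the idea is simply to evaluate $F$ on the $H$-linear function $l_q$ itself. By Lemma \ref{affineLemma}, $\nabla_{\cX} l_q \equiv q$; since $l_q - l_q = 0 \in W^{1,\alpha}_{\cX,0}(A)$, $l_q$ is admissible in the minimisation problem defining $\mu_q(A)$, so the upper bound in \eqref{assumption2} yields $\mu_q(A)\le F(l_q,A)\le C_2(|q|^\alpha+1)|A|$.

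For the lower bound in (i), I would take any admissible $u$ and apply Jensen's inequality to the convex function $|\cdot|^\alpha$ on $\R^m$, using the fact that the mean of the horizontal gradient of $u$ is $q$. More precisely, for $u$ smooth with $u=l_q$ on $\partial A$, Lemma \ref{Lemma1} gives $\int_A\nabla_{\cX} u\,dx=q|A|$; by density (taking a sequence in $C^\infty(\R^N)$ agreeing with $l_q$ on $\partial A$ and converging in the $W^{1,\alpha}_{\cX}$-norm) the identity extends to every $u$ with $u-l_q\in W^{1,\alpha}_{\cX,0}(A)$. Then Jensen gives
\[
\frac{1}{|A|}\int_A |\nabla_{\cX} u|^\alpha\,dx\;\geq\; \left|\frac{1}{|A|}\int_A \nabla_{\cX} u\,dx\right|^\alpha=|q|^\alpha,
\]
and combining this with the lower bound of \eqref{assumption2} gives $F(u,A)\ge C_1|q|^\alpha|A|$; passing to the infimum over admissible $u$ concludes the bound.

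For part (ii), the plan is the standard convex-combination trick. Let $u_1,u_2$ be minimisers of the problems defining $\mu_{q_1}(A)$ and $\mu_{q_2}(A)$ (existence was noted in the remark after \eqref{min}), and set $u:=\lambda u_1+(1-\lambda)u_2$. Since $W^{1,\alpha}_{\cX,0}(A)$ is a vector space, $u-l_{\lambda q_1+(1-\lambda)q_2}=\lambda(u_1-l_{q_1})+(1-\lambda)(u_2-l_{q_2})\in W^{1,\alpha}_{\cX,0}(A)$, so $u$ is admissible. By linearity of $\nabla_{\cX}$ and convexity of $f(x,\cdot)$ from \eqref{assumption1},
\[
f(x,\nabla_{\cX} u)\le \lambda f(x,\nabla_{\cX} u_1)+(1-\lambda)f(x,\nabla_{\cX} u_2),
\]
pointwise a.e.; integrating over $A$ and using the minimality of $u_1,u_2$ yields the claimed convexity inequality.

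The only mildly subtle step is the use of Lemma \ref{Lemma1} for Sobolev (rather than smooth) admissible functions in the lower bound of (i); this is handled by a routine density approximation of $u$ with $u-l_q\in W^{1,\alpha}_{\cX,0}(A)$ by smooth functions agreeing with $l_q$ on $\partial A$, which is legitimate because the trace operator is continuous for Lipschitz domains. Everything else is a direct application of the hypotheses and results already established.
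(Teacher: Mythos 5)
Your proof is correct and follows essentially the same route as the paper: testing with $l_q$ for the upper bound in (i), Jensen's inequality combined with the mean-value identity of Lemma \ref{Lemma1} for the lower bound (the paper packages this step as an appeal to Lemma \ref{Lemma2}), and the convex combination of admissible competitors for (ii). If anything, your explicit density step extending Lemma \ref{Lemma1} to Sobolev competitors is slightly more careful than the paper, which invokes Lemma \ref{Lemma2} (stated for the regularised problem $m_{reg}$ over smooth functions) without commenting on the passage to the $W^{1,\alpha}_{\cX,0}$ admissible class.
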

\begin{proof}
Using that $l_q= q\cdot \pi_{m}$ is admissible for the minimum defining $\mu_q(A)$ and that 
$ \nabla_{\cX}
l_q(x)=q$, we get
$$
\mu_q(A)\leq \int_Af\big(x, \nabla_{\cX} l_q(x)\big)\; d\,x
=
\int_Af\big(x,  q\big)\; d\,x
\leq 
C_2\big(|q|^\alpha+1\big)|A|.
$$
Moreover
\begin{align*}
\mu_q(A)&=
\min\left\{
\int_Af\big(x,\nabla_{\cX} u(x)\big)\, d\;x \;\big|
u-l_q\in W^{1,\alpha}_{\cX,0} (A)
\right\}\\
&\geq C_1
\min\left\{
\int_A|\nabla_{\cX} u(x)|^\alpha\, d\;x \;\big|
u-l_q\in W^{1,\alpha}_{\cX,0} (A)
\right\}\\
&=C_1\int_A|\nabla_{\cX} l_q(x)|^\alpha
\, d\;x=
C_1|q|^\alpha|A|,
\end{align*}
where for the last identity we  use Lemma \ref{Lemma2} for the convex function $f(x,q)=|q|^{\alpha}$, which tells that the minimisers are the $H$-affine functions, whenever $f$ does not depend on $x$. 

It remains to prove (ii). To this end it is enough to remark that for all functions $u_1$ and $u_2$ which are admissible respectively  for $\mu_{q_1}$ and $\mu_{q_2}$, then $\overline{u}:=\lambda u_1+(1-\lambda)u_2$ is admissible for $\mu_{\lambda q_1+(1-\lambda)q_2}$, which implies
\begin{align*}
\mu_{\lambda q_1+(1-\lambda)q_2}(A)&\leq 
\int_A f(x,\nabla_{\cX}\overline{u}(x))d\,x\\
&
=\int_A f\big(x,
\lambda \nabla_{\cX}u_1(x)+(1-\lambda)\nabla_{\cX}u_2(x)
\big)
\;d\,x\\
&\leq 
\lambda
\int_A f(x,
 \nabla_{\cX}u_1(x))\;d\,x
 +
(1-\lambda)
\int_A f(x,
 \nabla_{\cX}u_2(x))\;d\,x.
\end{align*}
Taking the minimum over all admissible $u_1$ and $u_2$, we get property (ii).
\end{proof}

To prove the convergence of the functional $F_\varepsilon(u,A)$ as $\varepsilon \to 0^+$, we need to show now a sort of  Akcoglu-Krengel  type result (see \cite{AK}) for periodic functionals, adapted to the anisotropic structure of the Heisenberg group. 
In \cite{Hafsa}  the authors prove a very interesting Akcoglu-Krengel  type result for general metric measure spaces.  We need to mention that unfortunately the result therein does not apply to our case. In fact it is quite easy to show that the Heisenberg group endowed with the Carnot-Carath\'eodory metric (or also with the homogeneous metric) and the Lebesgue measure is a $\big(G,\{\delta_t\}_{t>0}\big)$-metric measure space 
where $G$ is the subgroup of homeomorphisms on the Heisenberg group defined by the left-translations w.r.t. an element in $\Z^N$. Nevertheless one can also show that in general that space is not ``meashable''  according to the definition introduced in \cite{Hafsa}.
  We give a self-contained proof which can be later adapted to the stochastic case (which will be a topic in a forthcoming paper, see Section 6).
  
  We now recall that, defining for all $ t>0$, 
 $
Q^t=\delta_t(Q)
$,
  we know that $|\delta_t(Q)|=t^\mathcal{Q}|Q|$ (see \eqref{Volume}), where $\mathcal{Q}$ is the homogeneous dimension, then in $\dH^n$ in particular $\mathcal{Q}=2n+2=N+1$.

The next lemma tells that, as $t\to+\infty$, we can reduce to take the limits only over integer subsequences.
\begin{lemma}
\label{Reduction to integers}
Assume that the limit exists for integer sequences, i.e.  for $h\in \N$,
$$
\lim_{h\to\infty}\frac{\mu_q(Q^h)}{|Q^h|}=:C.
$$ 
 Then for all sequences $\{t_k\}\subset \R$ with $t_k\to \infty$ 
it holds
$$
\lim_{k\to\infty}\frac{\mu_q(Q^{t_k})}{|Q^{t_k}|}=C.
$$
\end{lemma}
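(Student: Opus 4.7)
The plan is to sandwich $Q^{t_k}$ between two integer-scaled cubes and exploit the fact that the ratio of their volumes tends to $1$. For $t>0$ large set $h:=\lfloor t\rfloor\in\N$, so that $h\le t<h+1$. Since $Q=[-1,1)^N$ and the dilation acts by $\delta_t(\underline{x}_1,\underline{x}_2,x_3)=(t\underline{x}_1,t\underline{x}_2,t^2x_3)$, one has componentwise
\[
Q^h\;\subseteq\; Q^t\;\subseteq\; Q^{h+1},
\]
while \eqref{Volume} gives $|Q^s|=s^{\mathcal Q}|Q|$, so $|Q^h|/|Q^t|=(h/t)^{\mathcal Q}\to 1$ and $|Q^{h+1}|/|Q^t|=((h+1)/t)^{\mathcal Q}\to 1$ as $t\to+\infty$.

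Next I would establish the following monotonicity with a controlled error: whenever $A\subseteq B$ are bounded Lipschitz domains,
\begin{equation}\label{monotonicity_plan}
\mu_q(B)\;\le\;\mu_q(A)\;+\;C_2(|q|^\alpha+1)\,|B\setminus A|.
\end{equation}
To see \eqref{monotonicity_plan}, take a minimiser $u^\ast$ for $\mu_q(A)$ (which exists by the remark after \eqref{min}) and extend it by $l_q$ on $B\setminus A$. Since $u^\ast-l_q\in W^{1,\alpha}_{\cX,0}(A)$, the trace matches $l_q$ on $\partial A$ and the extended function $\widetilde u$ lies in $W^{1,\alpha}_{\cX}(B)$ with $\widetilde u-l_q\in W^{1,\alpha}_{\cX,0}(B)$. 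Hence $\widetilde u$ is admissible for $\mu_q(B)$ and
\[
\mu_q(B)\le F(\widetilde u,B)=F(u^\ast,A)+\int_{B\setminus A}f(x,q)\,dx\le\mu_q(A)+C_2(|q|^\alpha+1)\,|B\setminus A|,
\]
where the last step uses the upper bound in \eqref{assumption2}.

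Applying \eqref{monotonicity_plan} to the pairs $Q^h\subseteq Q^t$ and $Q^t\subseteq Q^{h+1}$ yields
\[
\mu_q(Q^{h+1})-C_2(|q|^\alpha+1)\,|Q^{h+1}\setminus Q^t|\;\le\;\mu_q(Q^t)\;\le\;\mu_q(Q^h)+C_2(|q|^\alpha+1)\,|Q^t\setminus Q^h|.
\]
Dividing through by $|Q^t|$ and writing
\[
\frac{\mu_q(Q^h)}{|Q^t|}=\frac{\mu_q(Q^h)}{|Q^h|}\cdot\frac{|Q^h|}{|Q^t|},
\qquad
\frac{\mu_q(Q^{h+1})}{|Q^t|}=\frac{\mu_q(Q^{h+1})}{|Q^{h+1}|}\cdot\frac{|Q^{h+1}|}{|Q^t|},
\]
the integer hypothesis forces both ratios on the right to converge to $C$, while $|Q^t\setminus Q^h|/|Q^t|=1-(h/t)^{\mathcal Q}\to 0$ and $|Q^{h+1}\setminus Q^t|/|Q^t|=((h+1)/t)^{\mathcal Q}-1\to 0$. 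Specialising $t=t_k$ and letting $k\to\infty$, a squeeze argument gives $\mu_q(Q^{t_k})/|Q^{t_k}|\to C$, as required.

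The main (mild) obstacle is the extension step in \eqref{monotonicity_plan}: one must check that gluing $u^\ast$ with $l_q$ across $\partial A$ preserves membership in the horizontal Sobolev space, which is where the boundary condition $u^\ast-l_q\in W^{1,\alpha}_{\cX,0}(A)$ is used crucially; everything else reduces to the elementary asymptotics $(h/t)^{\mathcal Q}\to 1$.
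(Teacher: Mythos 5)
Your proposal is correct and follows essentially the same route as the paper: both compare $\mu_q(Q^{t})$ with $\mu_q(Q^{\lfloor t\rfloor})$ and $\mu_q(Q^{\lfloor t\rfloor+1})$ by extending a minimiser on the smaller set by the affine datum $l_q$ on the complement, paying an error of order $C_2(|q|^\alpha+1)|B\setminus A|$, and then use $|Q^{\lfloor t\rfloor}|/|Q^{t}|\to 1$. Your packaging of this as a single monotonicity inequality plus a squeeze is a tidier presentation of the paper's $\limsup$/$\liminf$ bookkeeping, but the underlying argument is identical.
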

\begin{proof}
For $t>0$ we define
$$
e_t:= \frac{\mu_q(Q^{t})}{|Q^{t}|}.
$$
Fix $\varepsilon>0$ and choose $N$  large enough that
$|C-e_h|<\varepsilon$ for $h\ge N.$

Denote by $C^+:=\limsup_{k\to\infty}e_{t_k},$ and 
$C^-:=\liminf_{k\to\infty}e_{t_k},$ which are both finite by Lemma \ref{Boundness_mu_q}.
We can find $k$ such that 
$$
e_{t_k}\ge C^+-\varepsilon
\quad
\textrm{and}
 \quad t_k>N.
$$
Define $N_k:=[ t_k]\ge N,$  (where by $[\cdot]$ we indicate the integer part of a real number)
and let $u_k$ be a function with $H$-affine boundary conditions on $Q^{N_k}$ such that $F(u_k, Q^{N_k})=\mu_q(Q^{N_k}).$ We extend
$u_k$ to $Q^{t_k}$ by letting it equal to the boundary condition on $Q^{t_k}\setminus Q^{N_k}$,
i.e. $\widetilde{u}_k:\ \R^N\to \R$ given by
$$
 \widetilde{u}_k(x):=
\left\{\begin{array}{ll} u_k(x),&\ {\rm if}\ x\in Q^{N_k},\\ 
l_q(x),&\ {\rm else,}
\end{array}\right.
$$
whose restriction to $Q^{t_k}$ is an admissible function for  $\mu_q(Q^{t_k}).$
Note that
$$
f(x,\nabla_{\cX}\widetilde u_k)=f(x,\nabla_{\cX}l_q)=f(x,q)\le C_2(|q|^\alpha+1)\quad \textrm{on} 
\quad
Q^{t_k}\setminus Q^{N_k},
$$
 hence
$$
F(\widetilde u_k,Q^{t_k})= \int_{Q^{N_k}}f(x,\nabla_{\cX}u_k)dx+\int_{Q^{t_k}\setminus Q^{N_k}}\!\!\!\!\!\!\!\!\!\!\!\!
f(x,q)dx\le  F(u_k,Q^{N_k}) +C| Q^{t_k}\setminus Q^{N_k}|, 
$$ where the constant depends on $q$ and $\alpha.$
 Since $\widetilde u_k$ is admissible for $Q^{t_k},$ so $\mu_q(Q^{t_k})\le F(\widetilde u_k,Q^{t_k}),$
we estimate
\begin{eqnarray*}
C^+&\le& e_{t_k}+\varepsilon\le \frac{F(\widetilde u_k,Q^{t_k})}{|Q^{t_k}|}+\varepsilon\le \frac{F(u_k,Q^{N_k})}{|Q^{t_k}|} +\varepsilon+ C\frac{| Q^{t_k}\setminus Q^{N_k}| }{|Q^{t_k}|} \\&=&\varepsilon+\frac{\mu_q(Q^{N_k})}{|Q^{t_k}|}+C\frac{| Q^{t_k}\setminus Q^{N_k}| }{|Q^{t_k}|}=e_{N_k}\frac{|Q^{N_k}|}{|Q^{t_k}|}+\varepsilon+ C\frac{| Q^{t_k}\setminus Q^{N_k}| }
{|Q^{t_k}|}.
\end{eqnarray*}
Note that $$
\lim_{k\to \infty} \frac{|Q^{N_k}|}{|Q^{t_k}|}=1\quad
\textrm{and}
\quad \lim_{k\to\infty} \frac{| Q^{t_k}\setminus Q^{N_k}| }{|Q^{t_k}|}=0,
$$
so, by choosing, if necessary, $N$ larger, we can make the right hand side $\le C+3\varepsilon,$ thus, as $\varepsilon$ was arbitrary,  we have shown $C^+\le C.$

For the opposite inequality, we use estimates similar to what we did before:  we can find infinitely many $k$ such that 
$$
e_{t_k}\le C^-+\varepsilon
\quad
\textrm{and}
 \quad
 t_k>{ N}.$$
Therefore we take $N_k=[t_k]+1 $ and let $u_k$ be a function with $H$-affine boundary condition $l_q$  on $Q^{t_k}$ such that $F(u_k, Q^{t_k})=\mu_q(Q^{t_k}).$ We extend
$u_k$ to a function  $\widetilde u_k$ on $Q^{N_k}$ which is admissible for $\mu_q(Q^{N_k})$ and equals $l_q$ on
$Q^{N_k}\setminus Q^{t_k}.$ 
Arguing as before we get
$$
F(\widetilde u_k,Q^{N_k})\le F(u_k,Q^{t_k})+C| Q^{N_k}\setminus Q^{t_k}|. 
$$ Then
\begin{eqnarray*}
C &\le &  e_{N_k}+\varepsilon\le \frac{F(\widetilde u_k,Q^{N_k})}{|Q^{N_k}|}+\varepsilon\le \frac{F(u_k,Q^{t_k})}{|Q^{N_k}|} +\varepsilon+ C\frac{| Q^{N_k}\setminus Q^{t_k}| }{|Q^{N_k}|}\\ &=&\varepsilon+\frac{\mu_q(Q^{t_k})}{|Q^{N_k}|}+C\frac{| Q^{N_k}\setminus Q^{t_k}| }{|Q^{N_k}|}=e_{t_k}\frac{|Q^{t_k}|}{|Q^{N_k}|}+\varepsilon+ C\frac{| Q^{N_k}\setminus Q^{t_k}| }
{|Q^{N_k}|}\\ &\le&
(C^-+\varepsilon)+(C^-+\varepsilon)\left(\frac{|Q^{t_k}|}{|Q^{N_k}|}-1 \right)+\varepsilon +C\frac{| Q^{N_k}\setminus Q^{t_k}| }
{|Q^{N_k}|}.
\end{eqnarray*}
By choosing, if necessary, $N$ larger, we can make the right hand side smaller than $ C^-+(2C^-+3)\varepsilon,$ thus we have shown $C\le C^-,$ but as $C^-\le C^+,$ we have $C^-=C=C^+.$

\end{proof}

 We denote $\N^*$ the set of natural numbers excluding 0. We next prove an Akcoglu-Krengel  type result.

\begin{teo}
\label{LimiteImportante}
Let consider the (semiopen) unit cell $Q=[-1,1)^N$ and let $q\in \R^{m}$ and $\mu_q$ be defined in \eqref{mu_q}. Assume  that $f$ is measurable and satisfies \eqref{assumption2} and \eqref{assumption-Periodic}, then 
$$
\lim_{k\to +\infty}\frac{\mu_q\big(Q^k\big)}{|Q^k|}=C_q,
$$
where $C_q$ is the non-negative constant given by
$$
C_q=\inf_{k\in \N^*}\frac{\mu_q\big(Q^k\big)}{|Q^k|}
.$$
\end{teo}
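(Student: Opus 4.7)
My plan is to prove this via a Fekete-type subadditivity argument suitably adapted to the anisotropic Heisenberg tilings. First I would record that Lemma \ref{Boundness_mu_q}(i) gives $C_1|q|^\alpha \le e_k := \mu_q(Q^k)/|Q^k| \le C_2(|q|^\alpha+1)$ for every $k$, so $C_q := \inf_{k\in\N^*} e_k$ lies in $[0,+\infty)$ and trivially $\liminf_{k\to\infty} e_k \ge C_q$; the task reduces to showing $\limsup_{k\to\infty} e_k \le C_q + \varepsilon$ for every $\varepsilon>0$.

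Given $\varepsilon>0$, I would choose $k_0\in\N^*$ with $e_{k_0} < C_q + \varepsilon$ and a minimiser $u_0$ of $\mu_q(Q^{k_0})$, so that $u_0 - l_q \in W^{1,\alpha}_{\cX,0}(Q^{k_0})$. For integer $h \gg k_0$, I would invoke the tiling $\{B_j := \tau_{\delta_{k_0}(j)}(Q^{k_0})\}_{j\in\Z^N}$ of $\R^N$ from \eqref{propertytiling_rescaled}, split its indices into $J_{\mathrm{in}} := \{j : B_j \subset Q^h\}$ and $J_{\mathrm{bd}} := \{j : B_j \cap Q^h \neq \emptyset,\ B_j \not\subset Q^h\}$, and build a competitor $u$ for $\mu_q(Q^h)$ by pasting
\[
u(x) := \begin{cases} u_0\!\left(2\delta_{k_0}(j)^{-1} * x\right) + l_q\!\left(2\delta_{k_0}(j)\right), & x \in B_j,\ j\in J_{\mathrm{in}},\\ l_q(x), & x \in R := Q^h \setminus \bigcup_{j\in J_{\mathrm{in}}} B_j. \end{cases}
\]
The additive constants are tailored so that the trace of $u$ on each $\partial B_j$ coincides with $l_q$, hence matches the value on the adjacent tile or on $R$: this uses $u_0 = l_q$ on $\partial Q^{k_0}$ together with the identity $l_q(a*b) = l_q(a) + l_q(b)$, immediate from \eqref{Group_Law} since the first $2n$ components of $a*b$ are $a_i+b_i$. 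Thus $u \in W^{1,\alpha}_\cX(Q^h)$ with $u - l_q \in W^{1,\alpha}_{\cX,0}(Q^h)$.

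On each tile $B_j$ with $j\in J_{\mathrm{in}}$, the change of variables $y = 2\delta_{k_0}(j)^{-1}*x$ has Jacobian of unit modulus by \eqref{DeterminantMatrixExt}, and left-invariance \eqref{LeftInvariant_vectorFields} gives $\nabla_\cX u(x) = \nabla_\cX u_0(y)$. Since $\delta_{k_0}(j) \in \Z^N$, the $H$-periodicity \eqref{assumption-Periodic} yields $f(x,\nabla_\cX u(x)) = f(y, \nabla_\cX u_0(y))$, whence $\int_{B_j} f(x,\nabla_\cX u)\,dx = \mu_q(Q^{k_0})$. Combining with $f(x,q) \le C_2(|q|^\alpha+1)$ on $R$ from \eqref{assumption2}, and using $|J_{\mathrm{in}}|\,|Q^{k_0}| \le |Q^h|$,
\[
\mu_q(Q^h) \le F(u,Q^h) \le |J_{\mathrm{in}}|\,\mu_q(Q^{k_0}) + C_2(|q|^\alpha+1)\,|R| \le |Q^h|\, e_{k_0} + C_2(|q|^\alpha+1)\,|R|.
\]

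The main obstacle will be the control of $|R|$: unlike in the Euclidean case, the tiles $B_j = 2\delta_{k_0}(j) * Q^{k_0}$ are not axis-aligned, the group law shearing them in the third coordinate by $k_0(j_1\cdot y_2 - j_2\cdot y_1)$, an amount that can reach order $k_0 h$ for tiles meeting $Q^h$ (exactly the issue flagged in the Introduction). Nonetheless, I expect a direct enumeration of the straddling indices (using that the shear forces $B_j$ to meet $Q^h$ only when $|j_1|_\infty, |j_2|_\infty$ are $\lesssim h/k_0$ and $|j_3| \lesssim h^2/k_0^2 + h/k_0$, while $B_j$ is entirely contained in $Q^h$ once these indices sit $O(1)$ inside those envelopes) to give $|J_{\mathrm{bd}}| = O(h^{2n+1}/k_0^{2n+1})$ and hence $|R| \le |J_{\mathrm{bd}}|\,|Q^{k_0}| = O(k_0 h^{2n+1})$. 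Since $|Q^h|$ scales as $h^{\mathcal{Q}} = h^{2n+2}$, this yields $e_h \le e_{k_0} + O(k_0/h)$; sending $h\to\infty$ and then $\varepsilon\to 0$ gives $\limsup_h e_h \le C_q$, which finishes the proof.
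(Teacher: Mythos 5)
Your proposal is correct and follows essentially the same route as the paper: the liminf bound is immediate from the definition of the infimum, and the limsup bound is obtained by pasting left-translates of a (near-)minimiser of $\mu_q(Q^{k_0})$ over the anisotropic tiling $\{\tau_{\delta_{k_0}(j)}(Q^{k_0})\}_{j\in\Z^N}$, evaluating each tile via the unit-Jacobian change of variables, left-invariance of the vector fields and $H$-periodicity (valid since $\delta_{k_0}(j)\in\Z^N$), and absorbing the leftover region by the upper growth bound. The only real difference is that you estimate the straddling-tile count $O\big((h/k_0)^{\mathcal{Q}-1}\big)$ by a direct enumeration that correctly accounts for the $O(h/k_0)$ vertical shear of the tiles, whereas the paper gets the same conclusion $|Q^h\setminus\widehat{S}|/|Q^h|\to 0$ by rescaling and citing Lemma 2.21 of Franchi--Tesi; note also that your additive constant $l_q(2\delta_{k_0}(j))$ is the correct one for the trace matching (the paper writes $q\cdot\pi_m(j_\rho)$, which misses a factor $2$ coming from $\tau_{-j_\rho}(x)=(-2j_\rho)*x$).
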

\begin{proof}
Note that, by Lemma \ref{Boundness_mu_q} - (i),
\begin{equation}
\label{VirginRadio}
C_1|q|^\alpha\leq
\frac{\mu_q\big(Q^k\big)}{|Q^k|}
\leq C_2\big(|q|^\alpha+1\big),
\end{equation}
so in particular $C_q\geq 0$.\\

{\bf Step 1.} Since $C_q$ is defined as  infumum over $\N^*$, trivially 
$
\frac{\mu_q\big(Q^k\big)}{|Q^k|}\geq C_q
$ for all $k\in \N^*$,
which implies
$$
\liminf_{k\to +\infty}\frac{\mu_q\big(Q^k\big)}{|Q^k|}\geq C_q.
$$

{\bf Step 2.} We next show the limsup estimate.
Using the definition of infimum for $C_q$, for all $\rho>0$, and the definition of $\mu_q$,
 then there exists $k_\rho\in \N^*$  and $u_\rho\in W^{1,\alpha}_{\cX}(Q^{k_\rho})
 \cap C^\infty  (Q^{k_\rho})$ 
 and such that 
\begin{align*}
&\frac{\mu_q(Q^{k_\rho})}
{|Q^{k_\rho}|}
\leq C_q+\frac{\rho}{2},\\
&
\frac{F(u_\rho,Q^{k_\rho})}
{|Q^{k_\rho}|}
\leq
\frac{\mu_q(Q^{k_\rho})}
{|Q^{k_\rho}|}+\frac{\rho}{2},
\end{align*}
where $F$ is the functional defined in \eqref{F}. This sums up as follows
\begin{equation}
\label{Sala}
\frac{F(u_\rho,Q^{k_\rho})}{|Q^{k_\rho}|}\leq
C_q+\rho.
\end{equation}
Recall the definition of 
$
\tau_k$ given in
\eqref{TranslazioneImportante};
we use such translations  to extend $u_\rho$ to the whole $\R^N$ by translating periodically the gradient. 
More precisely, 
let us introduce 
$$j_\rho:=
\delta_{k_\rho}(j)
\quad
\textrm{and}\quad
Q^\rho_j:= \tau_{j_\rho}\big(Q^{k_\rho}\big),
\quad 
\forall \; j\in \Z^N.
$$
Using that $\R^N=\bigcup_{j\in \Z^N} Q^\rho_j$, we can define
$$
U_\rho(x):=\sum_{j\in \Z^N}
\bigg(
q\cdot \pi_{m}(j_\rho  )+ u_\rho \big(\tau_{- j_\rho}(x)\big)
\bigg) \Uno_{ Q^\rho_j},
$$
where by $\Uno_A$ we indicate the 
characteristic function of the set $A$;
recall also that $\tau_{k}^{-1}=\tau_{-k}$.
The function $U_\rho$ is well-defined since 
$Q^\rho_j$ are all disjoint. 
We can easily check that $U_\rho$ is continuous on $\R^N$:  
in fact, for $x\in  Q^\rho_j$, then
$
U_\rho (x)=q\cdot \pi_{m}(j_\rho)+
u_\rho(\tau_{-j_\rho}(x))
$ and, whenever $x\in
\partial Q^\rho_j$, we have  $\tau_{-j_\rho}(x)\in \partial Q^{k_\rho}$ which implies
$$
U_\rho (x)=q\cdot \pi_{m}(j_\rho ) +q\cdot \pi_{m}\big(\tau_{-j_\rho}(x)\big)=
q\cdot \pi_{m}(j_\rho ) +q\cdot
\big(\pi_{m}(-j_\rho)+\pi_{m}(x)\big)=
q\cdot\pi_{m}(x),
$$
which does not anymore depend on $j$.
The continuity of $U_\rho$ on $\R^N$, together with the  fact that $U_\rho\in W^{1,\alpha}_{\cX}(Q^\rho_j)$, imply that
$U_\rho\in W^{1,\alpha}_{\cX,\textrm{loc}} \big(\R^N\big)$. \\

We next introduce the following two objects:
\begin{align*}
& S^\rho_k:=\bigg\{
j\in \Z^N|\, Q_{j}^{\rho}\subset Q^k=\delta_{k}(Q)
\bigg\},\\
&
\widehat{S}^\rho_k:=\bigcup_{j\in S^\rho_k}Q_j^\rho,
\end{align*}
and we construct a new function $v_\rho$, which is admissible for $\mu_q(Q^k)$, as
$$
v_\rho(x):=
\left\{
\begin{aligned}
&U_\rho(x),
\quad\quad\quad x\in \widehat{S}^\rho_k\\
&q\cdot\pi_{m}(x),
\quad\; x\in  Q^k\backslash\widehat{S}^\rho_k.
\end{aligned}
\right.
$$
By definition $v_\rho-l_q\in W^{1,\alpha}_{\cX,0}\big(Q^k\big)$, and
\begin{align*}
F(v_\rho, Q^k)&=
\int_{Q^k}
f(x,\nabla_{\cX}v_\rho(x))\,d\,x
\\
&=
\int_{ \widehat{S}^\rho_k}
f(x,\nabla_{\cX}v_\rho(x))\,d\,x+
\int_{Q^k\backslash  \widehat{S}^\rho_k}
f(x,\nabla_{\cX}v_\rho(x))\,d\,x
\end{align*}
First we compute
\begin{equation}
\label{Sabato1}
\int_{ \widehat{S}^\rho_k}
f(x,\nabla_{\cX}v_\rho(x))\,d\,x
=
\int_{ \widehat{S}^\rho_k}
f(x,\nabla_{\cX}U_\rho(x))\,d\,x=
\sum_{j\in S^\rho_k} 
\int_{ Q_j^\rho}
f(x,\nabla_{\cX}U_\rho(x))\,d\,x,
\end{equation}
where we have used that $Q_j^\rho$ are disjoint. If $x\in Q_j^\rho$, then
$U_\rho(x)
=q\cdot \pi_{m}(j_\rho)+u_\rho\big(\tau_{-j_\rho}(x)\big)$. By using that the vector fields are left invariant, we get
$$
\nabla_{\cX}U_\rho(x)=
\nabla_{\cX} \bigg(u_\rho\big(\tau_{-j_\rho}(x)\big)\bigg)=
\nabla_{\cX}u_\rho\big(\tau_{-j_\rho}(x)\big).
$$
Thus, by using the change of variables $y=\tau_{-j_\rho}(x)$ and recalling that the determinant of the Jacobian is 1  (see \eqref{DeterminantMatrixExt}), we get the following chain of identities:
\begin{equation}
\label{Sabato2}
\begin{aligned}
&\sum_{j\in S^\rho_k} 
\int_{ Q_j^\rho}
f(x,\nabla_{\cX}U_\rho(x))\,d\,x
=
\sum_{j\in S^\rho_k} 
\int_{ Q_j^\rho}
f\big(x,\nabla_{\cX}u_\rho\big(\tau_{-j_\rho}(x)
\big)\big)\,d\,x\\
=&
\sum_{j\in S^\rho_k} 
\int_{ Q^{k_\rho}}
f\big(\tau_{j_\rho}(y),\nabla_{\cX}u_\rho\big(y)
\big)\,d\,y
=\sum_{j\in S^\rho_k} 
\int_{ Q^{k_\rho}}
f\big(y,\nabla_{\cX}u_\rho\big(y)
\big)\,d\,y,
\end{aligned}
\end{equation}
where in the last identity above we have used the periodicity assumption on $f$ (see assumption \eqref{assumption-Periodic}). The integrals in the last term of \eqref{Sabato2} do not depend anymore on $j$, then 
\begin{equation}
\label{Sabato3}
\begin{aligned}
\sum_{j\in S^\rho_k} 
\int_{ Q^{k_\rho}}
f\big(y,\nabla_{\cX}u_\rho\big(y)
\big)\,d\,y&=
\textrm{card} \big(S^\rho_k\big)
\int_{ Q^{k_\rho}}
f\big(y,\nabla_{\cX}u_\rho\big(y)
\big)\,d\,y\\
&\leq
\textrm{card} \big(S^\rho_k\big) \big(C_q+\rho) |Q^{k_\rho}|,
\end{aligned}
\end{equation}
where the last inequality follows from \eqref{Sala}.\\
Put together \eqref{Sabato1},\eqref{Sabato2} and \eqref{Sabato3}, we get the following estimate:
\begin{equation}
\label{Sabato4}
\int_{ \widehat{S}^\rho_k}
f(x,\nabla_{\cX}v_\rho(x))\,d\,x
\leq
\textrm{card} \big(S^\rho_k\big) \big(C_q+\rho) |Q^{k_\rho}|.
\end{equation}

It remains to estimate the integral on the complementary of $\widehat{S}^\rho_k$ by using that $v_\rho(x)=q\cdot \pi_{m}(x)$ for all $x\in Q^{k}\backslash \widehat{S}^\rho_k$ by definition, hence
\begin{equation}
\label{Sabato5}
\int_{Q^k\backslash  \widehat{S}^\rho_k}
f(x,\nabla_{\cX}v_\rho(x))\,d\,x
\leq 
\int_{Q^k\backslash  \widehat{S}^\rho_k}
f(x,q)\,d\,x\leq
C_2(|q|^\alpha+1) |Q^k\backslash  \widehat{S}^\rho_k|.
\end{equation}
Estimates \eqref{Sabato4} and  \eqref{Sabato5}, together with the fact that $v_\rho$ is admissible for $\mu_q(Q^k)$, give
\begin{equation}
\label{QuasiFine} 
\begin{aligned}
\frac{\mu_q(Q^{k})}{|Q^k|}\leq 
\frac{F(v_\rho,Q^k)}{|Q^k|}\leq
\textrm{card} \big(S^\rho_k\big) \big(C_q+\rho) \frac{|Q^{k_\rho}|}{|Q^k|}+
C_2(|q|^\alpha+1) \frac{|Q^k\backslash  \widehat{S}^\rho_k|}{|Q^k|},
\end{aligned}
\end{equation} 
where in the last inequality we have used that $ \widehat{S}^\rho_k\subset Q^k$ and
$| \widehat{S}^\rho_k|=\textrm{card} \big(S^\rho_k\big) |Q^{k_\rho}|$, which together  imply
$\frac{\textrm{card} \big(S^\rho_k\big)\,|Q^{k_\rho}|}{|Q^k|}\leq 1$.

To conclude we  claim that the following limit holds true:
\begin{equation}
\label{claimLimit}
\lim_{k\to+\infty}\frac{|Q^k\backslash  \widehat{S}^\rho_k|}{|Q^k|}=0.
\end{equation}
Then, by simply taking the limsup as $k\to +\infty$ in the inequality \eqref{QuasiFine} and using claim 
 \eqref{claimLimit}, we get
$$
\limsup_{k\to +\infty}\frac{\mu_q(Q^{k})}{|Q^k|}\leq 
 \big(C_q+\rho),
$$
which conclude the proof as $\rho\to 0^+$.\\

It remains only now to prove claim
 \eqref{claimLimit}.
 By a simple rescaling we can actually show that this limit is the same as the one shown in the proof of Lemma 2.21 in \cite{Franchi2}. In fact, set $\varepsilon=\frac{1}{k}$, then by using the properties of dilations (Lemma \ref{DilationsProperties})
 $$
 Q^{\varepsilon}=\delta_{\varepsilon}(Q)=
 \delta_{\frac{1}{k_\rho}}\bigg(
  \delta_{k_\rho}\big(
  \delta_\varepsilon(Q)
  \big)
 \bigg)=
 \delta_{\frac{\varepsilon}{k_\rho}}(Q^{k_\rho}).
 $$
Set $\widetilde{Q}:=\delta_\varepsilon(Q^{k_\rho})=
 \delta_{\frac{\varepsilon}{k_\rho}}(Q^{\frac{1}{\varepsilon}})
$,  by using the properties of dilations and left-translations one can easily check that
$$
\tau_{\delta_{\varepsilon}(j)}
(Q^\varepsilon)\subset \widetilde{Q}
\quad
\Longleftrightarrow
\quad
\tau_{\delta_{k_\rho}(j)}
(Q^{k_\rho}).
$$
Thus by using the limit proved in  \cite{Franchi2} we conclude the proof.
\end{proof}

We define $f_0:\R^{m}\to\R$
as
\begin{equation}
\label{f_effective}
f_0(q):=C_q,
\end{equation}
where $C_q$ is the limit proved in Theorem \ref{LimiteImportante}.\\
From Lemma \ref{Boundness_mu_q}, one can show that $f_0$ keeps the properties of $f$ simply by passing to the limit as $k\to +\infty$. More precisely
\begin{lemma}
\label{Properties_f_0}
Given $f:\R^N\times \R^{m}\to \R$  measurable,
the following properties hold:
\begin{enumerate}
\item[(i)] if assumption \eqref{assumption2} is satisfied, then 
$$
C_1|q|^\alpha
\leq f_0(q)\leq C_2 \big(|q|^\alpha+1\big),
$$
where $C_1,C_2$ and $\alpha$ are the same constants given in  \eqref{assumption2}.
\item[(ii)]  if  assumption \eqref{assumption1} is satisfied,
then for all $q_1,q_2\in \R^{m}$ 
$$
f_0(\lambda q_1+(1-\lambda)q_2)
\leq \lambda f_0({q_1})+(1-\lambda)f_0({q_2}),
\quad \lambda\in (0,1).
$$
\end{enumerate}
\end{lemma}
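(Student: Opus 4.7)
The plan is to deduce both properties of $f_0$ directly from the analogous properties of $\mu_q$ established in Lemma \ref{Boundness_mu_q}, by passing to the limit in the sequence $\mu_q(Q^k)/|Q^k|$ whose limit defines $f_0(q)$ via Theorem \ref{LimiteImportante}.

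For part (i), I would specialise the inequality from Lemma \ref{Boundness_mu_q}(i) to the bounded Lipschitz domain $A = Q^k$ (noting that each $Q^k = \delta_k(Q)$ has Lipschitz boundary), yielding
\[
C_1|q|^\alpha \,|Q^k| \;\leq\; \mu_q(Q^k) \;\leq\; C_2\bigl(|q|^\alpha+1\bigr)\,|Q^k|.
\]
Dividing through by $|Q^k|$ and taking $k\to+\infty$, Theorem \ref{LimiteImportante} identifies the middle term with $f_0(q) = C_q$, and the two-sided bound is preserved in the limit. This gives (i) with the same constants $C_1, C_2, \alpha$.

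For part (ii), I would apply Lemma \ref{Boundness_mu_q}(ii) on $A = Q^k$ to obtain, for all $q_1,q_2\in\R^m$ and $\lambda\in(0,1)$,
\[
\frac{\mu_{\lambda q_1+(1-\lambda)q_2}(Q^k)}{|Q^k|} \;\leq\; \lambda\,\frac{\mu_{q_1}(Q^k)}{|Q^k|}+(1-\lambda)\,\frac{\mu_{q_2}(Q^k)}{|Q^k|}.
\]
Each of the three ratios converges as $k\to+\infty$ by Theorem \ref{LimiteImportante} (to $f_0(\lambda q_1+(1-\lambda)q_2)$, $f_0(q_1)$ and $f_0(q_2)$ respectively), so the convexity inequality passes to the limit, proving (ii).

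There is no real obstacle here: the lemma is essentially a corollary of the compatibility between the pointwise limit defining $f_0$ and the bound/convexity structure of $\mu_q$. The only thing to note is that applying Theorem \ref{LimiteImportante} implicitly requires the periodicity assumption \eqref{assumption-Periodic}, which is in force throughout this section since otherwise $f_0$ itself would not be defined; with that understood, the two passages to the limit are immediate.
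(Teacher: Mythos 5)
Your proposal is correct and follows exactly the route the paper intends: the paper itself gives no detailed proof, stating only that the properties of $f_0$ follow from Lemma \ref{Boundness_mu_q} ``simply by passing to the limit as $k\to+\infty$'', which is precisely the argument you spell out (apply the bounds and the convexity inequality on $A=Q^k$, divide by $|Q^k|$, and use Theorem \ref{LimiteImportante} to identify the limits). Your remark that the periodicity assumption \eqref{assumption-Periodic} is implicitly needed for $f_0$ to be defined is a correct and worthwhile observation.
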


We now prove the main result of the paper.
\begin{teo}
\label{MainTeoHomog}
Given a  bounded  domain $A\subset \R^N$ with Lipschitz boundary, $u:A\to \R$ and  the functional $F(u,A)$ defined in \eqref{F}. Let us assume that \eqref{assumption1},  \eqref{assumption2} and  \eqref{assumption-Periodic}  hold true, and  $u_0(x)=q\cdot \pi_m(x)+a$ for some $q\in \R^m$  and  $a\in \R$.
Define the rescaled functionals $F_{\varepsilon}$ introduced in \eqref{RescaledFunctional} and let us consider the corresponding minimisation problems for $u-u_0\in W^{1,\alpha}_{\cX,0}(A)$ (see \eqref{min})
then 
$$
\lim_{\varepsilon\to 0^+} m(F_{\varepsilon},u_0,A)
=m(F_{\infty},u_0,A),
$$ where the limit functional $F_{\infty}$ can be characterised as 
\begin{equation*}
F_{\infty}(u,A):=
\left\{\begin{aligned}
\int_A f_0\left(\nabla_{\cX} u\right)\; d\, x,&
\quad
u\in  W_{\cX}^{1,\alpha} (A),\\
+\infty,& \quad\textrm{else},
\end{aligned}
\right.
\end{equation*}
and  $f_0:\R^m\to \R$ defined  as $f_0(q)=C_q$ with  $C_q$ constant given in Theorem \ref{LimiteImportante}.

Moreover the limit function $f_0$ is still measurable, convex and satisfies the same growth condition \eqref{assumption2} satisfied by $f$.
\end{teo}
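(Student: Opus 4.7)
The plan is to combine the abstract characterization of $\Gamma$-convergence in Theorem \ref{T4rendic} with the cell-problem limit of Theorem \ref{LimiteImportante} to identify the $\Gamma$-limit and deduce convergence of the minima. As a first step I would derive a scaling identity: setting $v(y):=\varepsilon^{-1}u(\delta_\varepsilon y)$ and exploiting the homogeneity of the horizontal gradient (Lemma \ref{L:xjhom}) together with the volume identity \eqref{Volume}, a direct change of variables should yield
$$
\frac{m(F_\varepsilon,l_q,A)}{|A|}=\frac{\mu_q(\delta_{1/\varepsilon}A)}{|\delta_{1/\varepsilon}A|}
$$
for every bounded Lipschitz domain $A\subset\R^N$ and every $q\in\R^m$. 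Applied with $A=Q$, this combined with Theorem \ref{LimiteImportante} and Lemma \ref{Reduction to integers} gives $m(F_\varepsilon,l_q,Q)/|Q|\to f_0(q)$. I would then extend this limit to all Heisenberg-translates of scaled cubes $\tau_z(Q^t)$: by Lemma \ref{DilationsProperties}(4), $\delta_{1/\varepsilon}(\tau_z(Q^t))=\delta_{1/\varepsilon}(2z)*Q^{t/\varepsilon}$, and writing $\delta_{1/\varepsilon}(2z)=2k_\varepsilon*r_\varepsilon$ with $k_\varepsilon\in\Z^N$ and $r_\varepsilon$ in a fixed fundamental cell, the $H$-periodicity of $\mu_q$ (Lemma \ref{Proposition_periodic}) erases the integer translate, while the bounded remainder $r_\varepsilon$ only perturbs a relative boundary-layer of vanishing measure, which can be absorbed by the same extend-by-$l_q$ comparison as in Lemma \ref{Reduction to integers}.

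Next I would verify the hypotheses of Theorem \ref{T4rendic}: each $F_\varepsilon$ lies in $\mathcal{F}(\alpha,C_1,C_2)$ because \eqref{assumption2} is uniform in $x$, so choosing $D=\Q^m$ and $\mathcal{B}=\{\tau_z(Q^t):z\in\R^N,t>0\}$ (which contains a substantial family around every point $x\in\R^N$), the previous step yields $\lim_{\varepsilon\to 0^+}m(F_\varepsilon,l_q,B)=f_0(q)|B|$ for every $q\in D$ and every $B\in\mathcal{B}$. Theorem \ref{T4rendic} then produces a $\Gamma$-limit $F_\infty\in\mathcal{F}$ together with $m(F_\varepsilon,l_q,A)\to m(F_\infty,l_q,A)$ for every $q\in\R^m$ and every Lipschitz $A$. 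To identify the integrand $f_\infty$ of $F_\infty$, I would apply Theorem \ref{T1rendic} to $F_\infty$ with cubes $A_\rho=\tau_x(Q^\rho)$ around $x$: convergence of the minima combined with the cube limit just established gives $m(F_\infty,l_q,A_\rho)/|A_\rho|=f_0(q)$ for every $\rho>0$, and density of smooth functions in $W^{1,\alpha}_{\cX}$ together with the continuity of $F_\infty$ in the Sobolev norm ensure $m_{reg}(F_\infty,l_q,A_\rho)=m(F_\infty,l_q,A_\rho)$. Hence $f_\infty(x,q)=f_0(q)$ almost everywhere, so $F_\infty$ has the asserted form. The measurability, convexity and growth of $f_0$ transfer from $f$ via Lemma \ref{Properties_f_0}, and the constant $a$ in $u_0$ plays no role since $F$ depends on $u$ only through $\nabla_\cX u$.

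The main technical obstacle I anticipate is the extension of the cube limit from $Q^t$ centered at the origin to arbitrary Heisenberg-translated scaled cubes $\tau_z(Q^t)$: the noncommutative interaction between the anisotropic dilation $\delta_{1/\varepsilon}$, the left-translation $\tau_z$ and the period lattice $2\Z^N$ must be controlled through a boundary-layer estimate of the same flavour as, but more delicate than, the one carried out in Lemma \ref{Reduction to integers}.
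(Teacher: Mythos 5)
Your architecture coincides with the paper's: the scaling identity $m(F_\varepsilon,l_q,A)=\varepsilon^{\mathcal Q}\mu_q(\delta_{1/\varepsilon}A)$ (the paper's claim \eqref{ULTIMO}, proved there by the change of variables $y=\delta_{1/t}(x)$ and the homogeneity of $\nabla_\cX$), the Akcoglu--Krengel limit of Theorem \ref{LimiteImportante} together with Lemma \ref{Reduction to integers}, Theorem \ref{T4rendic} for the existence of the $\Gamma$-limit and the convergence of minima, and Theorem \ref{T1rendic} to identify the integrand. You are in fact more explicit than the paper about verifying the hypothesis of Theorem \ref{T4rendic} on a family $\mathcal B$ containing substantial families around \emph{every} point, and your periodicity-plus-boundary-layer scheme for passing from cells at the origin to translated cells is the right mechanism (the paper compresses all of this into the remark that Section 5 can be repeated with $Q$ replaced by $A_\rho$).

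There is, however, one concrete defect: the family $\mathcal B=\{\tau_z(Q^t)\}$ does \emph{not} contain a substantial family around any point in the sense defined in the paper. Indeed $|Q^t|=t^{\mathcal Q}|Q|$ with $\mathcal Q=N+1$, while the smallest Euclidean ball containing $\tau_z(Q^t)$ has radius of order $t$ as $t\to0^+$, hence volume of order $t^N$; the ratio of the two volumes is of order $t$ and tends to $0$, violating the condition $|A_\rho|\ge c\,|B(x,\rho)|$. Since both Theorem \ref{T4rendic} and Theorem \ref{T1rendic} are formulated for substantial families with respect to Euclidean balls (the proof of Theorem \ref{T1rendic} rests on the Lebesgue differentiation theorem), your choice of $\mathcal B$ makes both theorems inapplicable as stated. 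The repair is to take $\mathcal B$ to consist of Euclidean cubes around each point, as the paper does with $A_\rho=[-\rho,\rho]^N$; the covering argument of Step 2 of Theorem \ref{LimiteImportante} --- inner tiling of the blown-up domain by lattice translates of a near-optimal cell plus a negligible boundary layer --- applies verbatim to $\delta_t(A_\rho)$ for a Euclidean cube $A_\rho$, so the limit $\mu_q(\delta_t(B))/|\delta_t(B)|\to C_q$ still holds for such $B$, and with that substitution the rest of your argument goes through.
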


\begin{proof}
Applying Theorem \ref{T4rendic} we deduce that $F_{\varepsilon}$ $\Gamma$-converge to some functional $F_{\infty}$.
Let us now prove that the limit functional $F_{\infty}$ can be identified as the integral functional associated to $f_0$ given in \eqref{f_effective}. 
Choose as substantial family $A_\rho:=[-\rho,\rho]^N$, and
fix $t=\frac{1}{\varepsilon}$,  at the moment let us  assume the following claim:
\begin{equation}
\label{ULTIMO}
\mu_q(\delta_t(A_\rho))=t^\mathcal{Q}
m(F_{\varepsilon}, l_q,A_\rho),
\end{equation}
with $\mathcal{Q}$ homogeneous dimension. 

By using that all the  previous results in Section 5 can be obtained replacing $Q$ with the cube $A_\rho$,  we have
\begin{align*}
\frac{m(F_{\infty},l_q,A_\rho)}{|A_\rho|}&=
\lim_{\varepsilon\to 0^+} 
\frac{m(F_{\varepsilon},l_q,A_\rho)}{|A_\rho|} 
=\lim_{t\to +\infty}
\frac{1}{|A_\rho|}
 \frac{\mu_q(\delta_t(A_\rho))}{t^\mathcal{Q}}\\
 &=
\lim_{t\to +\infty}
 \frac{\mu_q\big(\delta_t(A_\rho)\big)}{|\delta_t(A_\rho)|}
=C_q,
\quad \forall\, \rho>0.
\end{align*}
By Theorem \ref{T1rendic}, passing to the limit as $\rho \to 0^+$,  we conclude $f_0(q)=C_q$.

It only remains to check claim \eqref{ULTIMO}.
At this purpose, we use the change of variables $y=\delta_{1/t}(x)$; hence recalling definition \eqref{mu_q}
and using that $\delta_t(u)$ is the scaled function defined as $\delta_t(u)(x)=u(\delta_t(x))$, we have
\begin{equation}
\label{finalissima}
\begin{aligned}
\mu_q(\delta_t(A_\rho))
&=\min\left\{
\int_{\delta_t(A_\rho)}f(x,\nabla_{
\cX} u(x))\,d\,x\;\big|\; u-l_q\in W^{1,\alpha}_{\cX,0}(\delta_t(A_\rho))
\right\}\\
&=t^{\mathcal{Q}}
\min\left\{
\int_{A_\rho}f(\delta_t(y),\nabla_{
\cX} u(\delta_t(y)))\,d\,y\;\big|\; \delta_t(u)-l_{tq}\in W^{1,\alpha}_{\cX,0}(A_\rho)
\right\},
\end{aligned}
\end{equation}
by simply using that $l_q(\delta_t(y))=q\cdot \pi_m(\delta_t(y))=q\cdot t\, \pi_m(y)=(tq)\cdot \pi_m(y)$ and \eqref{Volume}. Defining the function $w:=\frac{1}{t}\delta_t(u)$ and  using   Lemma \ref{L:xjhom}, we have that
$$
\nabla_{\cX}w(y)=\frac{1}{t} \;\nabla_{\cX}(\delta_t(u))(y)=\frac{1}{t}\; t\; \nabla_{\cX}u(\delta_t(y))=\nabla_{\cX}u(\delta_t(y)).
$$
Moreover informally we have  that, for $y\in \partial A_\rho$,  $\frac{1}{t}u(\delta_t(y))=\frac{1}{t}l_q(\delta_t(y))=l_q(y)$. Hence 
 \eqref{finalissima}  gives
 \begin{align*}
\mu_q(\delta_t(A_\rho))
&=t^{\mathcal{Q}}
\min\left\{
\int_{A_\rho}f(\delta_t(y),\nabla_{
\cX} w(y))\,d\,y\;\big|\; w-l_{q}\in W^{1,\alpha}_{\cX,0}(A_\rho)
\right\}\\
&=t^\mathcal{Q}
m(F_{1/t}, l_q,A_\rho),
\end{align*}
 which proves claim \eqref{ULTIMO}.

The properties for the limit function $f_0$ are proved in Lemma \ref{Properties_f_0}.
\end{proof}

\section{Applications and generalizations}
We conclude listing further directions in which we are presently 
working, for some of  which we obtained already some partial results.

\subsection{Homogenisation for  functionals associated to Carnot groups
and the subelliptic $p$-Laplacian.}

As mentioned in the introduction all the proofs never use the specific structure of the Heisenberg group but they instead  use properties true for all Carnot groups. So all the results apply without any modification to the general case of Carnot groups.\\

As it is well-known by Euler-Lagrange equations, we can connect minima of functionals  to solutions of PDEs. Whenever uniqueness holds this correspondence is one-to-one. Then our results can be used to study homogenisation for several subelliptic PDEs and in particular for the subelliptic $p$-Laplacian, which is defined, for  $1<p<+\infty$, as 
$$
\textrm{div}_{\cX}\left(\big<\mathcal{A}\nabla_{\cX}u,\nabla_{\cX}u\big>^{\frac{p-2}{2}}\mathcal{A}\nabla_{\cX}u
\right)
=0,
$$
where $\mathcal{A}(x)$ is a $m\times m$ symmetric matrix satisfying the usual  ellipticity condition.
Equations of this form have been studies by many authors, see e.g. \cite{ricciotti} and references therein.
The functional associated to the $p$-Laplacian is
$$
F_p(u,A)=
\left\{
\begin{aligned}
\int_A\big|\mathcal{A}\big(\delta_{\frac{1}{\varepsilon}}(x)\big)\nabla_{\cX}u(x)\big|^p\, d\, x,& \quad 
u\in W^{1,p}_{\cX}(A)\\
+\infty,& \quad\textrm{else}.
\end{aligned}
\right.
$$
Note that $ F_p$ satisfies all our conditions for all $1<p<+\infty$. Then we can apply Theorem \ref{MainTeoHomog}. It remains now to show that the limit functional has still the structure of a functional associated to a subelliptic $p$-Laplacian equation (work in preparation).

\subsection{Stochastic functionals.}
Another generalisation  is the case of 
 random functionals, i.e. integral functionals of the form
$$
u\mapsto
F_\varepsilon(u, A)=\int_{A}f\left(\delta_{\frac{1}{\varepsilon}}(x),\omega,\nabla  u(x)\right) dx,
$$
where $\omega$ belongs to a probability space and 
the integrand $f(x,\omega,p)$ is stationary and ergodic  with respect to left translations. 
For a precise definition of stationary ergodic in the setting of Carnot groups we refer to
 \cite{DDMM}, where the authors  prove an homogenisation result for stochastic Hamilton-Jacobi equations.
 The general stationary ergodic case
 will be treated in a forthcoming  paper, but we sketch here a proof for the simpler situation of short correlated random variables. 

More precisely, we assume that the random integrand $f(x,\omega, p)$ satisfies  \eqref{assumption1} and \eqref{assumption2} uniformly in $\omega$  and \eqref{assumption-Periodic} {\em in law}, i.e. the random integrand and its translations are not equal, but have the same law as random variables. In addition, we require that there exits a constant $C>0$ such that $f(x,\omega,p)$ and $f(y,\omega,p)$ are  independent, if $d_h(x,y)>C,$  where by $d_h(x,y)$ we indicate the homogeneous distance in Carnot groups, i.e. for example in  1-dimensional Heisenberg $d_h(x,y)=|y^{-1}*x|_h$ where $|x|_h:=\big((x_1^2+x_2^2)^2+x_{3}^2\big)^{1/4}$.
Note that this is different from being short correlated in the Euclidean distance.

Under these assumptions one can show along the lines of \cite{DMMampa} that 
$$
\lim_{k\to +\infty}\frac{\mu_q\big(\omega, Q^k\big)}{|Q^k|}=C_q,
$$
in probability to a constant $C_q>0$, and conclude convergence of the functionals in probability to an integral functional with constant integrand $f_0(q)=C_q.$

As a first step,  defining
$$
\widetilde \mu_q(Q):={\mathbb E}(\mu_q(\omega,Q)),
$$
one can show along the lines of Section 5 that 
$$
\lim_{k\to +\infty}\frac{\widetilde \mu_q\big(\omega, Q^k\big)}{|Q^k|}=C_q,
$$
for some constant $C_q>0$.
Note that because of the invariance in law, Lemma   \ref{Proposition_periodic} holds for $\widetilde \mu$ but not for $\mu(\omega,\cdot)$ with $\omega$ fixed.

Now fix $ k_0\gg1$ so large  that 
$$\left|\frac{\widetilde \mu_q\big(Q^k\big)}{|Q^k|}-C_q\right|<\delta/4,\quad \textrm{for all}\; k\ge k_0
$$
 and  now fix $k>>k_0$, we use the construction in step 2 of the proof of 
Theorem \ref{LimiteImportante} to show that
$$
\frac{\mu(\omega, Q^k)}{|Q^k|}\le \frac{|Q^k|}{|Q^{k_0}|}\sum_{j\in S_k^{k_0} }\frac{ \mu\left(\omega, \tau_{jk_0}(Q^{k_0})\right)}{|Q^{k_0}|}+o\left( 1\right).
$$
The r.h.s. is a normalised sum over $(k/k_0)^{\mathcal Q}$ independent, identically distributed random variables with mean close to $C_q.$ By the weak law of large numbers, we 
have that for $\delta>0$ and $k$ sufficiently large the quantity
$$
\beta(\delta):={\mathbb P}\left(\left\{\omega\;\bigg|\; \frac{\mu(\omega, Q^k)}{|Q^k|}>C_q+\delta/4 \right\}\right)
$$ is small. Now define
$$
\alpha(\delta):={\mathbb P}\left(\left\{\omega\; \bigg|\;\frac{\mu(\omega, Q^k)}{|Q^k|}<C_q-\sqrt{\delta} \right\}\right).
$$
We have
\begin{eqnarray*}
C_q&\le& \frac{{\mathbb E}\left(\mu(\omega, Q^k)\right)}{|Q^k|}+\delta/2\\&\le& \alpha(\delta)(C_q-\sqrt{\delta}) +C_2(|q|^\alpha+1)\beta(\delta)\\
&+&
{\mathbb P}\left(\left\{\omega\,\bigg|\; C_q-\sqrt{\delta} \le \frac{\mu(\omega, Q^k)}{|Q^k|}<C_q+\delta/4\right\}\right)+\delta/2\\
&\le & \alpha(\delta)(C_q-\sqrt{\delta}) +C_2(|q|^\alpha+1)\beta(\delta) +(1-\alpha-\beta)(C_q+\delta/4)+\delta/2\\
&\le & C_q-\alpha(\delta)\sqrt{\delta} +(3/4) \delta+\beta(\delta)C_2(|q|^\alpha+1).
\end{eqnarray*}
As we can make $\beta(\delta) $ arbitrarily small  by choosing $k$ big, this implies that for such $k$ also $\alpha(\delta)\to 0$ in order to avoid the contradiction $C_q<C_q.$\\

\noindent{\bf Acknowledgments.} The first author was partially supported by EPSRC via grant EP/M028607/1.
The third and the fourth author are members of GNAMPA-INdAM and have been partially supported also by the Fondazione CaRiPaRo Project ``Nonlinear Partial Differential Equations: Asymptotic Problems and Mean-Field Games''. They warmly thank University of Cardiff for the kind hospitality.

\end{document}